\definecolor{darkgreen}{rgb}{0,0.5,0}
\def\qed{\hfill$\square$}
\numberwithin{equation}{section}
\newtheorem{thm}[equation]{\sc Theorem}
\newtheorem{lem}[equation]{\sc Lemma}
\newtheorem{cor}[equation]{\sc Corollary}
\newtheorem{prop}[equation]{\sc Proposition}
\newtheoremstyle{notation}{3pt}{3pt}{}{}{\itshape}{:}{.5em}{\thmname{#1}}
\theoremstyle{notation}
\newtheorem{notation}{\it Notation}
\newtheorem{rem}{\it Remark}
\newtheorem{defin}{\it Definition}
\newtheorem{ex}{\it Example}
\newtheorem{algorithm}{\it Algorithm}
\def\epv {{$\mbox{}$\hfill ${\Box}$\vspace*{1.5ex} }}
\def\type{\mbox{\rm type\,}}
\def\mod{\mbox{{\rm mod}}}
\def\Hom{\mbox{\rm Hom}}
\def\Aut{\mbox{\rm Aut}}
\def\rad{\mbox{\rm rad}}
\def\soc{\mbox{\rm soc\,}}
\def\Im{\mbox{\rm Im}}
\def\rank{\mbox{\rm rank}}
\def\longarr#1#2{{\buildrel{#1} \over {\hbox to #2pt{\rightarrowfill}}}}
\def\D{\displaystyle}
\newcounter{boxsize}
\newcounter{tempcounter}
\newcommand{\smallentryformat}{\scriptstyle\sf}
\newcommand\num[1]{\makebox(0,0){$\smallentryformat#1$}}
\newcommand\smbox{\put(0,0){\line(1,0){\value{boxsize}}}%
  \put(\value{boxsize},0){\line(0,1){\value{boxsize}}}%
  \put(0,0){\line(0,1){\value{boxsize}}}%
  \put(0,\value{boxsize}){\line(1,0){\value{boxsize}}}}
\newcommand\numbox[1]{\put(0,0)\smbox%
  \put(0,0){\makebox(\value{boxsize},\value{boxsize})[c]{%
      $\smallentryformat#1$}}}
\newcommand\singllebox[1]{\raisebox{-.4ex}{\begin{picture}(4,0)\setcounter{boxsize}{3}%
    \put(0,0)\smbox%
    \put(0,0){\makebox(\value{boxsize},\value{boxsize})[c]{%
      $\scriptstyle\sf#1$}}\end{picture}}}
\def\sbullet{\makebox(0,0){$\scriptstyle\bullet$}}
\newcommand\boxes[2]{\ifthenelse{#2=3}{$\scriptstyle P_2^{#1}$}{%
                                       $\scriptstyle P_{#2}^{#1}$}}
\begin{document}
\thispagestyle{empty}


\bigskip\bigskip
\begin{center}
{\large\bf The boundary of the irreducible components %
  for invariant subspace varieties}

\medskip
Justyna Kosakowska and Markus Schmidmeier\footnote{The first named author is partially supported 
    by Research Grant No.\ DEC-2011/02/A/ ST1/00216
    of the Polish National Science Center. 
This research is partially supported 
    by a~Travel and Collaboration Grant from the Simons Foundation (Grant number
    245848 to the second named author).}

\bigskip \parbox{10cm}{\footnotesize{\bf Abstract:} Given partitions $\alpha$, $\beta$, $\gamma$, the short 
exact sequences 
$$0\longrightarrow N_\alpha \longrightarrow N_\beta 
   \longrightarrow N_\gamma \longrightarrow 0$$
of nilpotent linear operators of Jordan types $\alpha$, $\beta$, $\gamma$,
respectively, define a~constructible subset 
$\mathbb V_{\alpha,\gamma}^\beta$ of an affine variety.

Geometrically, the varieties $\mathbb V_{\alpha,\gamma}^\beta$ are of particular interest
as they occur naturally and since they typically consist of several irreducible components.
In fact, each  Littlewood-Richardson tableau $\Gamma$ of shape $(\alpha,\beta,\gamma)$
contributes one irreducible component $\overline{\mathbb V}_\Gamma$.

We consider the partial order 
$\Gamma\leq_{\sf boundary}\widetilde{\Gamma}$ 
on LR-tableaux which is 
the transitive closure of the relation given by
$\mathbb V_{\widetilde{\Gamma}}\cap \overline{\mathbb V}_\Gamma\neq \emptyset$.
In this paper we compare the boundary relation with partial orders
given by algebraic, combinatorial and geometric conditions.
It is known that 
in the case where the parts of $\alpha$ are at most two,
all those partial orders are equivalent.  
 We prove that those partial orders are also equivalent
 in the case where $\beta\setminus\gamma$ is a~horizontal and
 vertical strip.
Moreover, we discuss how the orders differ in general.
}

\medskip \parbox{10cm}{\footnotesize{\bf MSC 2010:
14L30, 16G20, 47A15.
}}

\medskip \parbox{10cm}{\footnotesize{\bf Key words:
Nilpotent operator, Invariant subspace, Partial order, Degeneration, Littlewood-Richardson tableau.
}}
\end{center}

\section{Introduction}

Often in geometry, naturally occurring conditions define subsets of varieties
which are either very big in size or tiny.
For example, among all linear operators acting on a~given finite dimensional 
vector space, the invertible ones form an open and dense subset.
And so do, among all nilpotent operators, those which have only one 
Jordan block. 
A notable exception to this rule occurs 
in the variety of short exact sequences of
nilpotent linear operators; it is partitioned, by means of Littlewood-Richardson
tableaux, into components of equal dimension.  They are the topic of this paper.

Throughout we assume that $k$ is an algebraically closed field.
Each nilpotent $k$-linear 
operator is given uniquely, up to isomorphy, as a~$k[T]$-module
$N_\alpha=\bigoplus_{i=1}^s k[T]/(T^{\alpha_i})$ for some 
partition $\alpha=(\alpha_1,\ldots,\alpha_s)$ which represents the sizes of its Jordan blocks.

The Theorem of Green and Klein \cite{gk} states that for given partitions
$\alpha$, $\beta$, $\gamma$, there exists a~short exact sequence 
$$0\longrightarrow N_\alpha\longrightarrow N_\beta
   \longrightarrow N_\gamma\longrightarrow 0$$
of nilpotent linear operators if and only if there is a~Littlewood-Richardson
(LR-) tableau of shape $(\alpha,\beta,\gamma)$. 
The collection of all such short exact sequences forms 
a~variety $\mathbb V_{\alpha,\gamma}^\beta(k)$ which 
can be partitioned using LR-tableaux, as follows.
Consider the affine variety $\Hom_k(N_\alpha,N_\beta)$ of $k$-linear maps endowed with the Zariski topology,
and assume that all subsets carry the induced topology. Define
\begin{eqnarray*} \mathbb V_{\alpha,\gamma}^\beta(k)\;=\;\big\{f:N_\alpha\to N_\beta & \; :\; &
  f \;\text{monomorphism of}\; k[T]\text{-modules}\\
  & & \qquad\text{with cokernel isomorphic to}\; N_\gamma\big\}.
\end{eqnarray*}
The irreducible 
components of  $\mathbb V_{\alpha,\gamma}^\beta(k)$ 
are counted by the Littlewood-Richard\-son coefficient.
Namely, to each monomorphism in $\mathbb V_{\alpha,\gamma}^\beta$ one can associate
an LR-tableau $\Gamma$ of shape $(\alpha,\beta,\gamma)$, 
as we will see in Section~\ref{section-LR}.
The subset $\mathbb V_\Gamma$ of $\Hom_k(N_\alpha,N_\beta)$ 
of all such monomorphisms is constructible and irreducible.
All the $\mathbb V_\Gamma$ have the same dimension.  
We denote by $\overline{\mathbb V}_\Gamma$
the closure of $\mathbb V_\Gamma$ in $\mathbb V_{\alpha,\gamma}^\beta$; 
the sets $\overline{\mathbb V}_\Gamma$
define the irreducible components of $\mathbb V_{\alpha,\gamma}^\beta$, they are indexed
by the set $\mathcal T_{\alpha,\gamma}^\beta$ of all LR-tableaux of shape $(\alpha,\beta,\gamma)$
(see \cite[Theorem 4.3]{leeuwen} and \cite{maeda}). 

Our aim in this paper is to shed light on the geometry in the variety
$$\mathbb V_{\alpha,\gamma}^\beta\;=\;
     \bigcup^\bullet_{\Gamma\in\mathcal T_{\alpha,\gamma}^\beta}\mathbb V_\Gamma;$$
by studying the boundary relation given as follows.
\begin{equation}
\Gamma\preceq_{\sf boundary}\widetilde{\Gamma} \quad\Leftrightarrow\quad \mathbb V_{\widetilde{\Gamma}}
                 \cap\overline{\mathbb V}_\Gamma\neq\emptyset
                 \qquad\text{where}\qquad \Gamma,\widetilde{\Gamma}\in\mathcal T_{\alpha,\gamma}^\beta. 
\label{eq-closure}\end{equation} 
 Obviously, $\preceq_{\sf boundary}$ is reflexive and we will see that it is anti-symmetric.
 We denote by $\leq_{\sf boundary}$ the transitive closure of $\preceq_{\sf boundary}$.
 In general, for a~reflexive and anti-symmetric relation $\preceq_{\sf x}$ we denote its transitive closure
   by $\leq_{\sf x}$.

On the set 
$$\mathcal P=\mathcal{P}_{\alpha,\gamma}^\beta=\{\mathbb{V}_\Gamma\; :\; \Gamma\in \mathcal{T}_{\alpha,\gamma}^\beta\}$$
of 
irreducible components of the representations space $\mathbb V_{\alpha,\gamma}^\beta$, 
there are several relations of algebraic, geometric and combinatorial nature:  the hom- and the ext-order,
the degeneration order and the boundary condition, the box relation and the dominance order.
By taking the reflexive and transitive closure of each relation, if necessary, we obtain six partial orders on the set $\mathcal P$.  
It is the aim of the paper to compare those partial orders.

Given two partial orders  
$(\mathcal P, \leq_x), (\mathcal P, \leq_y)$ on the same set, 
we write $\leq_x\;\to\; \leq_y$ and  say that $(\mathcal P, \leq_x)$ is {\it contained} in $(\mathcal P, \leq_y)$ 
if $P \leq_x Q$  implies  $P \leq_y Q$ for all $P,Q \in\mathcal P$.  
With respect to the containment relation, we obtain the following diagram (whenever the box-relation is defined):

$$
\setlength\unitlength{.6mm}
\begin{picture}(40,80)
\put(20,0){\makebox(0,0){$\leq_{\sf dom}$}}
\put(0,20){\makebox(0,0){$\leq_{\sf hom}$}}
\put(40,20){\makebox(0,0){$\leq_{\sf boundary}$}}
\put(20,40){\makebox(0,0){$\leq_{\sf deg}$}}
\put(20,60){\makebox(0,0){$\leq_{\sf ext}$}}
\put(20,80){\makebox(0,0){$\leq_{\sf box}$}}
\put(20,50){\makebox(0,0){$\downarrow$}}
\put(20,70){\makebox(0,0){$\downarrow$}}
\put(10,30){\makebox(0,0){$\swarrow$}}
\put(30,30){\makebox(0,0){$\searrow$}}
\put(10,10){\makebox(0,0){$\searrow$}}
\put(30,10){\makebox(0,0){$\swarrow$}}
\end{picture}
$$

We give examples that, in general, no two relations are equal.  
However, if $\beta\setminus\gamma$ is a horizontal and vertical strip, then they are all equal.

Since the set $\mathcal{P}$ of irreducible components of $\mathbb{V}_{\alpha,\gamma}^\beta$ is in bijection
with the set $\mathcal{T}_{\alpha,\gamma}^\beta$, we will work with posets $(\mathcal{T}_{\alpha,\gamma}^\beta,\leq_{\sf x})$
instead of $(\mathcal{P},\leq_{\sf x})$.

We are ready to present  the main results of the paper.

\subsection{Two algebraic tests}

The algebraic group $G=\Aut_{k[T]}(N_\alpha)\times\Aut_{k[T]}(N_\beta)$  
acts on 
$\mathbb V_{\alpha,\gamma}^\beta$ via $(a,b)\cdot f=bfa^{-1}$.  The orbits of this group action
are in one-to-one correspondence with the isomorphism classes of embeddings
$f:N_\alpha\to N_\beta$.

We consider the following reflexive relation for LR-tableaux.
We say $\Gamma\preceq_{\sf deg}\widetilde{\Gamma}$ if there are embeddings $f\in\mathbb V_\Gamma$,
$\tilde f\in\mathbb V_{\widetilde{\Gamma}}$ such that $f\leq_{\sf deg}\tilde f$, that is,
$\mathcal O_{\tilde f}\subset\overline{\mathcal O}_f$, where $\mathcal{O}_f$
is the orbit of $f$ under the action of $G$ on $\mathbb V_{\alpha,\gamma}^\beta$.

The degeneration relation is under control algebraically as the ext-relation $\leq_{\sf ext}$ implies
the deg-relation $\leq_{\sf deg}$, which in turn implies the hom-relation $\leq_{\sf hom}$ (see Section \ref{section-deg}). 
We show that the boundary relation implies the restriction $\leq_{\sf hom-picket}$
of the hom order to certain objects called {\it pickets.}

In the diagram below, the relations introduced so far on the set
$\mathcal T_{\alpha,\gamma}^\beta$ are ordered vertically by containment.

$$
\setlength\unitlength{.6mm}
\begin{picture}(40,60)
\put(20,0){\makebox(0,0){$\leq_{\sf hom-picket}$}}
\put(0,20){\makebox(0,0){$\leq_{\sf hom}$}}
\put(40,20){\makebox(0,0){$\leq_{\sf boundary}$}}
\put(20,40){\makebox(0,0){$\leq_{\sf deg}$}}
\put(20,60){\makebox(0,0){$\leq_{\sf ext}$}}
\put(20,50){\makebox(0,0){$\downarrow$}}
\put(10,30){\makebox(0,0){$\swarrow$}}
\put(30,30){\makebox(0,0){$\searrow$}}
\put(10,10){\makebox(0,0){$\searrow$}}
\put(30,10){\makebox(0,0){$\swarrow$}}
\end{picture}
$$

We show that the restriction of the hom-order to pickets is an anti-symmetric relation.
As a consequence, all the relations in the diagram are partial orders
on $\mathcal T_{\alpha,\gamma}^\beta$.
We have algebraic tests both for the validity and for the failure
of the boundary relation: 

\begin {thm}\label{theorem-ext-bound}
Suppose $\alpha$, $\beta$, $\gamma$ are partitions.
The following implications hold for 
LR-tableaux $\Gamma$, $\widetilde\Gamma$ of shape $(\alpha,\beta,\gamma)$:

$$\Gamma\leq_{\sf ext}\widetilde{\Gamma} \quad\Longrightarrow\quad
   \Gamma\leq_{\sf boundary}\widetilde{\Gamma} \quad\Longrightarrow\quad
   \Gamma\leq_{\sf hom-picket}\widetilde{\Gamma}.$$
\end {thm}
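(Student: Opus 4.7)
The plan is to prove the two implications separately. The first, $\Gamma\leq_{\sf ext}\widetilde{\Gamma}\Rightarrow\Gamma\leq_{\sf bound}\widetilde{\Gamma}$, proceeds via the classical chain ``ext-degeneration implies orbit-closure containment.'' The second, $\Gamma\leq_{\sf bound}\widetilde{\Gamma}\Rightarrow\Gamma\leq_{\sf hom-picket}\widetilde{\Gamma}$, rests on upper semicontinuity of $\dim\Hom(P,-)$ together with the crucial fact that, for a picket $P$, this dimension is an invariant of the LR-tableau.

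For the first implication, I unfold the definition of $\leq_{\sf ext}$: there exist $f\in\mathbb V_\Gamma$ and $\tilde f\in\mathbb V_{\widetilde{\Gamma}}$ together with a short exact sequence $0\to f_1\to f\to f_2\to 0$ in the category of embeddings such that $\tilde f\cong f_1\oplus f_2$. The category of embeddings of nilpotent $k[T]$-modules into nilpotent $k[T]$-modules of bounded exponent is equivalent to a~module category over a~finite dimensional algebra, so the Riedtmann--Zwara theorem is available: an ext-degeneration yields $\mathcal{O}_{\tilde f}\subset\overline{\mathcal{O}}_f$ for the action of $G=\Aut_{k[T]}(N_\alpha)\times\Aut_{k[T]}(N_\beta)$. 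Because the LR-tableau associated to an embedding depends only on the isomorphism class, the set $\mathbb V_\Gamma$ is $G$-stable, hence $\overline{\mathcal{O}}_f\subset\overline{\mathbb V}_\Gamma$. Thus $\tilde f\in\mathbb V_{\widetilde{\Gamma}}\cap\overline{\mathbb V}_\Gamma$, i.e.\ $\Gamma\leq_{\sf bound}\widetilde{\Gamma}$.

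For the second implication, pick $g\in\mathbb V_{\widetilde{\Gamma}}\cap\overline{\mathbb V}_\Gamma$ witnessing the boundary relation. For a~picket $P$, the function $h\mapsto\dim\Hom(P,h)$ on the affine space $\Hom_k(N_\alpha,N_\beta)$ is upper semicontinuous, being the corank of a~matrix whose entries depend linearly on $h$. Since $\mathbb V_\Gamma$ is irreducible, this function attains its minimum on a~dense open subset; call that generic value $[P,\Gamma]$. Upper semicontinuity applied at $g\in\overline{\mathbb V}_\Gamma$ gives $\dim\Hom(P,g)\geq[P,\Gamma]$. To close the argument I invoke the combinatorial fact that for a~picket $P=P_\ell^r$ the dimension $\dim\Hom(P,h)$ is determined by the LR-tableau of $h$ alone (an explicit count visible from the picket's simple one-column structure). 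In particular $\dim\Hom(P,g)=[P,\widetilde{\Gamma}]$, and we conclude $[P,\widetilde{\Gamma}]\geq[P,\Gamma]$ for every picket~$P$, which is exactly $\Gamma\leq_{\sf hom-picket}\widetilde{\Gamma}$.

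The step I expect to be most delicate is the last one: the picket-hom invariance, i.e.\ that $\dim\Hom(P_\ell^r,h)$ is constant on each $\mathbb V_\Gamma$. The ext-to-deg passage is classical, and upper semicontinuity is routine, but the picket-hom identity needs a~concrete combinatorial setup that ties $\Hom(P_\ell^r,-)$ to numerical data read off directly from $\Gamma$. If one instead defines $\leq_{\sf hom-picket}$ by comparing \emph{generic} hom-dimensions over $\mathbb V_\Gamma$ and $\mathbb V_{\widetilde\Gamma}$, the constancy can be dropped and the argument proceeds verbatim from the semicontinuity estimate above.
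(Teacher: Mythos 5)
Your overall architecture matches the paper's. For the first implication you run the classical chain $\leq_{\sf ext}\Rightarrow\leq_{\sf deg}$ via Riedtmann--Zwara and then observe that $\mathbb V_\Gamma$ is $G$-stable, hence $\overline{\mathcal O}_f\subset\overline{\mathbb V}_\Gamma$ and the degeneration point lands in $\mathbb V_{\widetilde\Gamma}\cap\overline{\mathbb V}_\Gamma$. This is exactly the paper's route.

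For the second implication you apply upper semicontinuity of $h\mapsto\dim\Hom(\text{picket},h)$ directly, then invoke the fact that this dimension is a tableau invariant. The paper packages the same content slightly differently: it proves (Lemma~\ref{lem-closed-subset}, Proposition~\ref{proposition-critereon4closed}) that the sets cut out by the partial sums $(\gamma^{(i)})'_1+\cdots+(\gamma^{(i)})'_\ell\ge n$ are closed in $\mathbb V^\beta_{\alpha,\gamma}$, hence $\leq_{\sf bound}$ implies $\leq_{\sf dom}$, and separately (Proposition~\ref{prop-hom2dom}) identifies those partial sums with picket-hom dimensions via the adjunction $\Hom_{\mathcal S}((A\subset B),L_i(P^\ell))\cong\Hom_{\mathcal N}(B/T^iA,P^\ell)$ of Lemma~\ref{lemma-adjoint}. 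You recognize the picket-hom invariance as the crux but assert rather than derive it; that derivation is precisely the paper's adjointness computation, which shows $\dim\Hom_{\mathcal S}((A\subset B),P_i^\ell)=\sum_j\min\{\gamma^{(i)}_j,\ell\}$, a pure tableau invariant. One cosmetic issue: you write $\Hom(P,h)$ with the picket as the source, whereas the paper's definition of $\leq_{\sf hom}$ and Proposition~\ref{prop-hom2dom} place the picket as the target, i.e.\ $\Hom(h,P)$; either direction supports the semicontinuity argument once the invariance is established, but the convention should be aligned with the order's definition. Your closing remark — that defining $\leq_{\sf hom-picket}$ by generic hom-dimensions would sidestep the constancy — is a reasonable escape hatch, but the paper's adjunction makes the stronger pointwise statement available with little extra effort, which is the cleaner choice.
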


We present proofs in Section~\ref{section-deg}.

\subsection{Two combinatorial criteria}
%
%
On the set $\mathcal T_{\alpha,\gamma}^\beta$, there are two partial orders of
combinatorial nature.  The {\it dominance relation} $\leq_{\sf dom}$
is given by the natural partial orders of the partitions defining the tableaux.
The second relation is the {\it box-order} $\leq_{\sf box}$, it is given by repeatedly exchanging two 
entries in the tableau in such a~way that the smaller entry moves up and such
that the lattice permutation condition is preserved.
We introduce the two orders formally in Section~\ref{section-lr-orders}.

\bigskip
The following result presents a~necessary and a~sufficient criterion of
combinatorial nature
for two LR-tableaux to be in boundary relation:

\begin {thm}\label{theorem-bound-dom}
Given partitions $\alpha$, $\beta$, $\gamma$,
the following implications hold for 
LR-tableaux $\Gamma$, $\widetilde\Gamma$ of shape $(\alpha,\beta,\gamma)$.
\begin{enumerate}
\item[(a)] If $\Gamma\leq_{\sf boundary}\widetilde\Gamma$ 
  then $\Gamma\leq_{\sf dom}\widetilde\Gamma$.
\item[(b)] Suppose $\beta\setminus\gamma$ is a~horizontal strip.
  If $\Gamma\leq_{\sf box}\widetilde\Gamma$ then $\Gamma\leq_{\sf boundary}\widetilde\Gamma$.
\end{enumerate}
\end {thm}

We show in Section \ref{section-closure2box} that the dominance relation is in fact
equivalent to the restriction of the hom-order to pickets.
The second part follows from a result in \cite{ks-ext}:

\begin {prop}\label{proposition-box-ext}
Suppose $\Gamma$, $\widetilde\Gamma$ are LR-tableaux which have the same shape and which
are horizontal strips.  If $\Gamma\leq_{\sf box}\widetilde\Gamma$ then 
$\Gamma\leq_{\sf ext}\widetilde\Gamma$.
\end {prop}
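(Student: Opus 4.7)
The plan is to reduce to a single box exchange and then realize that exchange by an explicit non-split short exact sequence of embeddings.

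\emph{Reduction.} By definition $\leq_{\sf box}$ is the transitive closure of single-box exchanges and $\leq^*_{\sf ext}$ is the transitive closure of $\leq_{\sf ext}$, so it suffices to handle the case where $\widetilde\Gamma$ is obtained from $\Gamma$ by swapping the entry $a$ in a box of some row $r_1$ with the entry $b$ in a box of some row $r_2$, where $a<b$, $r_2<r_1$, and both fillings are valid LR-tableaux.

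\emph{Local construction.} Because $\beta\setminus\gamma$ is a horizontal strip, each column of the skew shape carries at most one box. I expect one can choose representatives $f\in\mathbb V_\Gamma$ and $\tilde f\in\mathbb V_{\widetilde\Gamma}$ as direct sums of pickets, one per box of $\beta\setminus\gamma$, so that $f$ and $\tilde f$ differ only in the two summands indexed by the swapped boxes. For this two-picket piece, I would exhibit an explicit non-split short exact sequence
$$0\longrightarrow P_1\longrightarrow E\longrightarrow P_2\longrightarrow 0$$
whose endpoints $P_1,P_2$ are the two pickets of $\tilde f$ corresponding to the entries $a$ in row $r_2$ and $b$ in row $r_1$, and whose middle term $E$ is isomorphic, as an embedding, to the two-picket piece of $f$ corresponding to $a$ in row $r_1$ and $b$ in row $r_2$. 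Taking direct sum with the common (unchanged) pickets then yields a global non-split extension $0\to\tilde f_1\to f\to\tilde f_2\to 0$ with $\tilde f_1\oplus\tilde f_2\cong\tilde f$, which gives $\Gamma\leq_{\sf ext}\widetilde\Gamma$; concatenating such steps along the chain of exchanges underlying $\Gamma\leq_{\sf box}\widetilde\Gamma$ yields $\Gamma\leq^*_{\sf ext}\widetilde\Gamma$.

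\emph{Main obstacle.} The core technical point is the construction of the local extension and the identification of its middle term as the picket configuration prescribed by $\Gamma$. This is where the horizontal-strip hypothesis is essential: it guarantees that the two swapped boxes lie in distinct columns and that no other entry of $\beta\setminus\gamma$ shares a column with them, so the extension between the two pickets can be written down explicitly by a single matrix entry coupling the two Jordan blocks along the appropriate powers of $T$. Nonsplitness, which is what delivers $\leq_{\sf ext}$ rather than merely $\leq_{\sf deg}$, should follow from the strict inequality $a<b$ together with the strict inequalities between entries and their ambient row sizes that define the pickets of $\Gamma$, making the relevant $\Ext^1$ class nonzero. Finally one has to confirm that the LR-tableau associated to $E$ agrees locally with the swapped configuration of $\Gamma$, which reduces to a direct calculation of the induced filtration on $E$ by powers of $T$.
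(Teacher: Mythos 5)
Your reduction to a single box move is fine, and the overall strategy — build an explicit short exact sequence whose middle term realizes $\Gamma$ and whose end terms sum to a realization of $\widetilde\Gamma$ — matches the paper. But the proposal fails at the crucial point of choosing building blocks: it is not true that a horizontal-strip LR-tableau can be realized by a direct sum of pickets. A picket $P^n_m$ is an embedding into a single Jordan block, and its LR-tableau has the entries $1,2,\dots,m$ stacked \emph{vertically} in one column; as soon as the skew diagram contains a box with entry $\geq 2$, a picket contributing that box would also contribute an entry $1$ in the same column, contradicting the horizontal-strip hypothesis. Horizontal strips force the column entries to be spread across distinct columns, which is precisely why the paper decomposes into \emph{poles} (cyclic submodule, but multi-block ambient space) rather than pickets; see Corollary 2.11. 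Your ``one picket per box'' decomposition simply does not exist for the tableaux at stake.

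This forces a genuinely different local construction. The paper uses a refined pole decomposition (Lemma 2.12) that isolates two extended poles $\Gamma'$, $\Gamma''$ in $\Gamma$ and $\widetilde\Gamma'$, $\widetilde\Gamma''$ in $\widetilde\Gamma$ sharing the same shape and differing exactly by the box move, together with a common residual $\Gamma'''$. The middle term $Y$ of the desired sequence is then built on the ambient space $X_{\sf amb}\oplus Z_{\sf amb}$ with a \emph{twisted} cyclic subspace, $Y_{\sf sub}=(a_X,g^s_Z T^{s-u})+(0,a_Z)$, not simply $X\oplus Z$. Verifying that $Y$ has LR-tableau $\Gamma$ is then a nontrivial graded dimension count, not a routine identification; the twist is designed so the computation closes. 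In short, your high-level plan (single box move $\Rightarrow$ explicit extension) is sound, but the key combinatorial input is wrong (pickets rather than poles), which in turn makes the proposed local extension ill-defined for entries $>1$, and the step where you identify the middle term with the unperturbed direct sum glosses over the place where the actual work lies.
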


\subsection{Horizontal and vertical strips}

Of particular interest is the case where the partitions are such 
that $\beta\setminus\gamma$ is both a~horizontal and a vertical strip.
In this situation, the combinatorial relations $\leq_{\sf box}$ and $\leq_{\sf dom}$
are in fact equivalent.
In \cite{kst} we give two proofs for this statement; below
in Section~\ref{section-lr-orders} we sketch the algorithmic approach 
in one of them.  We deduce the following result.

\begin {thm}\label{theorem-horizontal-vertical}
Suppose $\alpha$, $\beta$, $\gamma$ are partitions such that 
$\beta\setminus\gamma$ is a~horizontal and vertical strip. 
The following relations are partial orders which are equivalent to each other.
$$\leq_{\sf box},\;\leq_{\sf ext},\;\leq_{\sf deg},\;\leq_{\sf hom},\;
   \leq_{\sf boundary}, \; \leq_{\sf dom}.$$
\end {thm}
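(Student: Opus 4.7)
The plan is to close a cycle of implications through all six relations, using the results already stated in the introduction together with the extra strip hypothesis. The anchor is the combinatorial equivalence $\leq_{\sf box}\iff\leq_{\sf dom}$, which holds precisely under the assumption that $\beta\setminus\gamma$ is both a horizontal and a vertical strip, as recalled in Section~\ref{section-lr-orders} (with proofs in \cite{kst}). Once this identification is in hand, I sandwich each of the remaining four relations between these two extremes.

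For the forward direction, since a horizontal-and-vertical strip is in particular a horizontal strip, the LR-tableaux of shape $(\alpha,\beta,\gamma)$ are themselves horizontal strips and Proposition~\ref{proposition-box-ext} applies to give $\leq_{\sf box}\Rightarrow\leq^*_{\sf ext}$. The standard implications $\leq_{\sf ext}\Rightarrow\leq_{\sf deg}\Rightarrow\leq_{\sf hom}$ from the diagram in the introduction (justified in Section~\ref{section-deg}) upgrade on transitive closure to $\leq^*_{\sf ext}\Rightarrow\leq^*_{\sf deg}\Rightarrow\leq^*_{\sf hom}$, while Theorem~\ref{theorem-ext-bound} supplies the geometric branch $\leq^*_{\sf ext}\Rightarrow\leq^*_{\sf bound}$.

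For the return direction, Theorem~\ref{theorem-bound-dom}(a) gives $\leq_{\sf bound}\Rightarrow\leq_{\sf dom}$ and, since $\leq_{\sf dom}$ is transitive, this promotes to $\leq^*_{\sf bound}\Rightarrow\leq_{\sf dom}$. On the algebraic side, the hom-order trivially refines the hom-picket order, and the equivalence $\leq_{\sf hom-picket}=\leq_{\sf dom}$ from Section~\ref{section-deg} then yields $\leq^*_{\sf hom}\Rightarrow\leq_{\sf dom}$. Schematically,
$$\leq_{\sf dom}\;=\;\leq_{\sf box}\;\Rightarrow\;\leq^*_{\sf ext}\;\Rightarrow\;\leq^*_{\sf bound}\;\Rightarrow\;\leq_{\sf dom},$$
with side branches $\leq^*_{\sf ext}\Rightarrow\leq^*_{\sf deg}\Rightarrow\leq^*_{\sf hom}\Rightarrow\leq_{\sf dom}$ absorbing the remaining two relations. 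All six relations therefore coincide with $\leq_{\sf dom}$, and being equal to a partial order each of them is a partial order.

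The substantive content sits entirely in the ingredients being invoked: the combinatorial step $\leq_{\sf dom}\Rightarrow\leq_{\sf box}$ from \cite{kst} genuinely requires the full horizontal-and-vertical strip hypothesis, and the algebraic step $\leq_{\sf box}\Rightarrow\leq^*_{\sf ext}$ of Proposition~\ref{proposition-box-ext} demands short exact sequences realising individual box moves. I expect those two inputs to be where the real work lies; once they are available, the present theorem is bookkeeping, and the only sanity check worth mentioning is that a horizontal-and-vertical strip is in particular a horizontal strip, so that Proposition~\ref{proposition-box-ext} does apply to the tableaux under consideration.
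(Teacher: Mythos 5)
Your proof is correct and matches the paper's argument in all essentials: both close the cycle of implications by combining Theorem~\ref{theorem-dom-box} ($\leq_{\sf dom}\Leftrightarrow\leq_{\sf box}$ under the strip hypothesis), Proposition~\ref{proposition-box-ext} ($\leq_{\sf box}\Rightarrow\leq^*_{\sf ext}$ for horizontal strips), and the standing implications from the diagram above Theorem~\ref{theorem-ext-bound} that feed every other relation back into $\leq_{\sf dom}$. The only cosmetic difference is your choice to route the closing arrow through $\leq^*_{\sf bound}$ (via Theorem~\ref{theorem-ext-bound}) rather than $\leq^*_{\sf hom}$; both branches appear in the diagram and the paper leans on the diagram as a whole, so this is the same proof.
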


For comparison we note that there is a related result 
about the six partial orders
in the case where all parts of $\alpha$ are at most two.
In this situation
the orbits and the boundary relation are given combinatorially in terms of
arc diagrams and of resolution of crossings, respectively \cite{ks-hall,ks-survey}:

\begin {thm}
Suppose $\alpha$, $\beta$, $\gamma$ are partitions such that all parts of $\alpha$ are
at most two. 
The relations $\leq_{\sf dom}$, $\leq_{\sf hom}$, $\leq_{\sf boundary}$,
  $\leq_{\sf deg}$, $\leq_{\sf ext}$, $\leq_{\sf box}$ are all partial orders
  which are equivalent to each other.
\end {thm}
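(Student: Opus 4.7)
The plan is to close the chain of implications recorded in Theorem \ref{theorem-ext-bound} and in the diagram preceding it by proving the single new implication $\leq_{\sf dom}\Rightarrow\leq_{\sf ext}^{*}$ under the two-part hypothesis on $\alpha$. All other implications between these six relations are available in general: $\leq_{\sf ext}\Rightarrow\leq_{\sf deg}\Rightarrow\leq_{\sf bound},\leq_{\sf hom}$, together with $\leq_{\sf bound},\leq_{\sf hom}\Rightarrow\leq_{\sf dom}$ (the former being Theorem \ref{theorem-bound-dom}(a), the latter the dom--hom equivalence on pickets recalled before Theorem \ref{theorem-ext-bound}). Once the loop is closed, all six relations collapse to a single equivalence class; since $\leq_{\sf dom}$ is antisymmetric, each of them is then automatically a partial order.

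The first task is combinatorial: show $\leq_{\sf dom}\Rightarrow\leq_{\sf box}$ when every LR-tableau entry lies in $\{1,2\}$. Because only two symbols appear, any dominance refinement decomposes into a finite sequence of elementary exchanges that swap a $1$ (below) and a $2$ (above) in adjacent rows. One checks that the lattice-permutation condition is preserved at each step by a short case analysis on the relative column positions of the $1$'s and $2$'s in the rows involved; this case analysis is tractable precisely because only two values occur.

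The second task is algebraic and is the main obstacle: each elementary box move must be realized by a short exact sequence of embeddings giving an ext-degeneration. Since every part of $\alpha$ is at most two, the subobject $N_\alpha$ is a direct sum of pickets of heights one and two, and for each permissible exchange one constructs an explicit local extension modifying only the two pickets involved while preserving $\alpha$, $\beta$ and $\gamma$. This is the content of the Hall-algebra description of the relevant category of embeddings in \cite{ks-hall}. Without the two-part restriction such local extensions need not exist, which is exactly why the theorem is confined to this case.

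Assembling the pieces, we obtain
\[
\leq_{\sf dom}\;\Longrightarrow\;\leq_{\sf box}\;\Longrightarrow\;\leq_{\sf ext}^{*}\;\Longrightarrow\;\leq_{\sf deg}^{*}\;\Longrightarrow\;\leq_{\sf bound}^{*},\;\leq_{\sf hom}^{*}\;\Longrightarrow\;\leq_{\sf dom},
\]
closing the loop. Hence all six relations coincide on $\mathcal T_{\alpha,\gamma}^{\beta}$, and each is a partial order. In fact, since dominance between two such tableaux is already realized by a single chain of elementary box moves, the transitive closures coincide with the original relations in this setting.
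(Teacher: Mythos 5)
The paper does not prove this theorem; it explicitly recalls it from \cite{ks-hall, ks-survey}, so there is no in-paper proof against which to compare. Your high-level plan — close the cycle $\leq_{\sf dom}\Rightarrow\leq_{\sf box}\Rightarrow\leq_{\sf ext}^*\Rightarrow\leq_{\sf deg}^*\Rightarrow\leq_{\sf bound}^*,\leq_{\sf hom}^*\Rightarrow\leq_{\sf dom}$ — is indeed the same architecture the paper uses for Theorem~\ref{theorem-horizontal-vertical}, and it is the right outline. There are, however, two genuine gaps.

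First, a definitional one. The relation $\leq_{\sf box}$ as defined in Section~\ref{section-lr-orders} of this paper is defined \emph{only} for LR-tableaux whose skew shape $\beta\setminus\gamma$ is a horizontal strip; a box move is a column rearrangement. But the hypothesis ``all parts of $\alpha$ at most two'' does not imply that $\beta\setminus\gamma$ is a horizontal strip: it only forces every column of the skew diagram to have at most two boxes (one $\singlebox 1$ above a $\singlebox 2$), so columns of length two are allowed (compare the tableau $\Gamma_X$ in Lemma~\ref{lemma-s4}). Your argument therefore invokes box moves on tableaux for which they are not defined here. The cited papers work with a different combinatorial object in this regime (arc diagrams with arc moves), so you cannot simply assert $\leq_{\sf dom}\Rightarrow\leq_{\sf box}$ and cite the preserved lattice-permutation condition; you must first supply a definition of $\leq_{\sf box}$ that is coherent in the two-row-column case and then redo the implication.

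Second, a logical one. The statement asserts that $\leq_{\sf ext}$, $\leq_{\sf deg}$, $\leq_{\sf bound}$, $\leq_{\sf hom}$ \emph{themselves} are partial orders equivalent to $\leq_{\sf dom}$, without passing to transitive closures — in contrast with Theorem~\ref{theorem-horizontal-vertical}, which is stated with stars. Your chain proves $\leq_{\sf dom}\Rightarrow\leq_{\sf ext}^*$ and hence only the equality of the \emph{starred} relations with $\leq_{\sf dom}$. From this one cannot conclude, for example, that $\leq_{\sf bound}$ is transitive: $\leq_{\sf bound}\subseteq\leq_{\sf dom}=\leq_{\sf bound}^*$ is always true and carries no information. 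The failure of transitivity for $\leq_{\sf bound}$ in general is documented in Example~\ref{ex-not-transitive}, so this cannot be waved away. Your final sentence gestures at this by claiming that a single dominance step is realized by a single chain of elementary moves, but that does not collapse the chain to a single $\leq_{\sf ext}$ step, and you give no argument that each unstarred relation already contains $\leq_{\sf dom}$. To make the theorem as stated go through, one must prove the strong implication $\leq_{\sf dom}\Rightarrow\leq_{\sf ext}$ directly (one degeneration per dominance comparison), which is a substantially harder, and genuinely two-row-specific, construction.
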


\subsection{Related results}

The Theorem of Gerstenhaber and Hesselink shows 
that the natural partial order of partitions is equivalent
to the degeneration order of nilpotent linear operators, 
see \cite{gerst,hess,kraft}. We investigate a~similar problem: connections
of the dominance order of LR-tableaux with the boundary order defined below. 
Also extensions of nilpotent linear operators
are of interest as they are connected with the classical Hall algebras 
and Hall polynomials, see \cite{macd}. 
Well understood are generic extensions and their relationships 
with the specializations to $q=0$ of the Ringel-Hall algebras, 
see \cite{dengdu,dengdumah,kos12,reineke,reineke1}.

\subsection{Organization of this paper}

In Section \ref{section-LR}, we describe how partitions and tableaux describe 
short exact sequences of linear operators, or equivalently of {\it embeddings}
or {\it invariant subspaces} of linear operators.
Moreover, we introduce pickets as special types of embeddings.

\smallskip

In Section \ref{section-closure}, we show that the boundary relation in Formula~(\ref{eq-closure})
implies the dominance order  (Part (a) of Theorem~\ref{theorem-bound-dom}).
 As a consequence we obtain that the boundary relation is anti-symmetric. 
 We present an~example showing that $\preceq_{\sf boundary}$ may not be transitive.
  Moreover, 
 in (\ref{section-bound-dom-not-equiv}) we provide an~example showing that  
 the assumption that $\beta\setminus\gamma$ is a~vertical strips is
     necessary in Theorem \ref{theorem-horizontal-vertical}.

\smallskip
In Section \ref{section-deg}, we adapt the ext- deg- and hom-relations for modules to tableaux.
As for modules, the ext-order implies the degeneration order, which implies the hom-order.
Moreover, the hom-relation implies the dominance order.
This completes the proof of Theorem~\ref{theorem-ext-bound}.
Using results given in \cite{ks-ext} and in \cite{kst},
we show part (b) of Theorem~\ref{theorem-bound-dom}
and complete the proof of Theorem~\ref{theorem-horizontal-vertical}.

\section{Littlewood-Richardson tableaux}
\label{section-LR}

Given three partitions, $\alpha$, $\beta$, $\gamma$, we consider the set 
$\mathcal T_{\alpha,\gamma}^\beta$ of 
all Littlewood-Richardson tableaux of shape $(\alpha,\beta,\gamma)$.
We define the dominance order on the set $\mathcal T_{\alpha,\gamma}^\beta$.
Moreover, we introduce the LR-tableau of a~short exact sequence, and determine the tableaux
for certain types of short exact sequences, in particular pickets.
For the case where the skew diagram $\beta\setminus\gamma$ is a~horizontal strip, 
we also introduce the box-order.

\subsection{Combinatorial orders on the set of LR-tableaux}
\label{section-lr-orders}

\begin{notation}
Recall that a~{\it partition} $\alpha=(\alpha_1,\ldots,\alpha_s)$
is a~finite non-increasing sequence of natural numbers;
we picture $\alpha$ by its Young diagram which consists of $s$ columns of length
given by the parts of $\alpha$.
The {\it transpose} $\alpha'$ of $\alpha$ is given by the formula
$$\alpha'_j=\#\{i:\alpha_i\geq j\},$$
it is pictured by the transpose of the Young diagram for $\alpha$.
Two partitions $\alpha$, $\widetilde{\alpha}$ are in the {\it natural partial order,}
  in symbols $\alpha\leq_{\sf nat}\widetilde{\alpha}$, if the  inequality 
  $$\alpha'_1+\cdots+\alpha'_j\leq\widetilde{\alpha}'_1+\cdots+\widetilde{\alpha}'_j$$
  holds for each $j$.

\bigskip
Given three partitions $\alpha$, $\beta$, $\gamma$, 
an {\it LR-tableau} of shape $(\alpha,\beta,\gamma)$ is a~Young diagram of
shape $\beta$ in which the region $\beta\setminus\gamma$ contains
$\alpha_1'$ entries $\singllebox 1$, $\ldots$, $\alpha_t'$ entries $\singllebox t$, 
where $t=\alpha_1$ is the largest entry, such that
\begin{itemize}
\item in each row, the entries are weakly increasing,
\item in each column, the entries are strictly increasing,
\item for each $\ell>1$ and for each column $c$:
  on the right hand side of $c$, the number of entries $\ell-1$
  is at least the number of entries $\ell$.
\end{itemize}
The skew diagram  $\beta\setminus\gamma$ 
is said to be a~{\it horizontal strip} if $\beta_i\leq \gamma_i+1$ 
holds for all $i$, and a~{\it vertical strip} if $\beta'\setminus\gamma'$
is a~horizontal strip.
\end{notation}

\smallskip

\begin {ex}  Let $\alpha=(3,2)$, $\beta=(4,3,3,2,1)$, $\gamma=(3,2,2,1)$.
Then the transpose of $\alpha$ is $\alpha'=(2,2,1)$, so we have to fill the skew
diagram $\beta\setminus\gamma$ with two $\singllebox1$'s, two $\singllebox 2$'s, 
and one $\singllebox 3$.
Due to the conditions on an LR-tableau, this can be done in exactly two ways.

$$ \setcounter{boxsize}{3}
\begin{picture}(18,12)(0,2)
\multiput(0,12)(3,0)5{\smbox}
\put(12,12){\numbox{1}}
\multiput(0,9)(3,0)4{\smbox}
\put(9,9){\numbox{2}}
\multiput(0,6)(3,0)3{\smbox}
\put(3,6){\numbox{1}}
\put(6,6){\numbox{3}}
\put(0,3){\numbox{2}}
\end{picture}
\;\;\;\;\;\;\;\;
\setcounter{boxsize}{3}
\begin{picture}(18,12)(0,2)
\multiput(0,12)(3,0)5{\smbox}
\put(12,12){\numbox{1}}
\multiput(0,9)(3,0)4{\smbox}
\put(9,9){\numbox{1}}
\multiput(0,6)(3,0)3{\smbox}
\put(3,6){\numbox{2}}
\put(6,6){\numbox{2}}
\put(0,3){\numbox{3}}
\end{picture}
$$
In this example, $\beta\setminus\gamma$ is~a~horizontal but not a~vertical strip.
\end {ex}

\begin{notation}
One can represent an LR-tableau $\Gamma$ by a~sequence of partitions
$$\Gamma=[\gamma^{(0)},\ldots,\gamma^{(t)}]$$
where $\gamma^{(i)}$ denotes the region in the Young diagram $\beta$ which contains the entries
$\singllebox{}$, $\singllebox{1}$, $\ldots$, $\singllebox{i}$.
If $\Gamma$ has shape $(\alpha,\beta,\gamma)$, then $\gamma=\gamma^{(0)}$,
$\beta=\gamma^{(t)}$, and $\alpha_i'=|\gamma^{(i)}\setminus\gamma^{(i-1)}|$ for $i=1,\ldots,t$.
\end{notation}

\smallskip
In the example above, the first tableau is given by the sequence of partitions
$\Gamma=[(3,2,2,1),(3,3,2,1,1),(4,3,2,2,1),(4,3,3,2,1)]$.

We introduce two partial orders on the set 
$\mathcal T_{\alpha,\gamma}^\beta$ of all LR-tableaux of 
shape $(\alpha,\beta,\gamma)$.

\begin {defin}
Two LR-tableaux $\Gamma=[\gamma^{(0)},\ldots,\gamma^{(t)}]$, 
  $\widetilde{\Gamma}=[\widetilde{\gamma}^{(0)},\ldots,\widetilde{\gamma}^{(t)}]$ 
  of the same shape
  are in the {\it dominance order,} in symbols $\Gamma\leq_{\sf dom}\widetilde{\Gamma}$,
  if for each $i$, the corresponding partitions 
  $\gamma^{(i)}$, $\widetilde{\gamma}^{(i)}$ are in the natural partial order, i.e. 
   $\gamma^{(i)}\leq_{\sf nat}\widetilde{\gamma}^{(i)}$.
\end {defin}

\begin {defin}
Suppose $\Gamma$, $\widetilde\Gamma$ are LR-tableaux of the same shape which we assume
to be a~horizontal strip.
We say $\widetilde\Gamma$ is obtained from $\Gamma$ by {\it a~box move} if
after two entries in $\Gamma$ have been exchanged in such
a way that the smaller entry is in the higher position
                                                        in $\widetilde\Gamma$, 
                                                   we obtain $\widetilde\Gamma$ by re-sorting the 
  list of
columns if necessary.
We denote by $\leq_{\sf box}$ the partial order generated by box moves.
\end {defin}

Here is an example:
$$\raisebox{4mm}{$\Gamma:$} \quad
\begin{picture}(15,12)(0,3)
\multiput(0,12)(3,0)4{\smbox}
\put(12,12){\numbox{1}}
\multiput(0,9)(3,0)3{\smbox}
\put(9,9){\numbox{2}}
\multiput(0,6)(3,0)2{\smbox}
\put(6,6){\numbox{3}}
\put(0,3){\smbox}
\put(3,3){\numbox{1}}
\put(0,0){\numbox{2}}
\end{picture}
\quad\raisebox{3mm}{$<_{\sf box}$} \;\;\;
\raisebox{4mm}{$\widetilde\Gamma:$} \quad
\begin{picture}(15,12)(0,3)
\multiput(0,12)(3,0)4{\smbox}
\put(12,12){\numbox{1}}
\multiput(0,9)(3,0)3{\smbox}
\put(9,9){\numbox{1}}
\multiput(0,6)(3,0)2{\smbox}
\put(6,6){\numbox{2}}
\put(0,3){\smbox}
\put(3,3){\numbox{2}}
\put(0,0){\numbox{3}}
\end{picture}
$$

\begin {rem}
 In \cite{ks-ext} the box-order is defined in a~more general case:  in the case when
 LR-tableaux are unions of so called columns. For simplicity, we present definitions and results for
 horizontal strips.
\end {rem}

\begin {lem}
For LR-tableaux of the same shape, the $\leq_{\sf box}$-order 
implies the $\leq_{\sf dom}$- order.
\end {lem}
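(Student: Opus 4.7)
The plan is to reduce to a single box move and then track the effect on each partition $\gamma^{(i)}$ in the tableau sequence. Since $\leq_{\sf dom}$ is by definition the componentwise natural order on $[\gamma^{(0)},\dots,\gamma^{(s)}]$ it is automatically transitive, so it suffices to consider the case that $\widetilde\Gamma$ arises from $\Gamma$ by one box move. Such a move exchanges entries $p<q$ sitting at rows $r_p>r_q$ of the picture, so that after the exchange the smaller entry $p$ occupies the higher row $r_q$. The hypothesis that $\beta\setminus\gamma$ is a horizontal strip (that is, $\beta_i\le\gamma_i+1$) ensures that each picture column of $\beta$ contains at most one skew box, which in turn forces the entries $p$ and $q$ to lie in different columns and the swap to modify only those two entries.

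Next I would compare $\gamma^{(i)}$ with $\widetilde\gamma^{(i)}$ according to the value of $i$. For $i<p$ neither entry contributes to the $i$-th partition on either side, and for $i\ge q$ both entries contribute on both sides, so $\gamma^{(i)}=\widetilde\gamma^{(i)}$. In the middle range $p\le i<q$ only the entry $p$ is relevant: it is located in row $r_p$ in $\Gamma$ and in row $r_q$ in $\widetilde\Gamma$, so $\widetilde\gamma^{(i)}$ is obtained from $\gamma^{(i)}$ by removing a single box in row $r_p$ and inserting a single box in the strictly higher row $r_q$.

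To convert this into the natural order I would use that the partial sum $\alpha'_1+\cdots+\alpha'_J$ is precisely the number of boxes of $\alpha$ lying in the top $J$ rows of the picture. The box move therefore changes this partial sum by $+1$ exactly when $r_q\le J<r_p$, and by $0$ in all remaining cases; in particular it never decreases. Hence $\gamma^{(i)}\leq_{\sf nat}\widetilde\gamma^{(i)}$ for every $i$, so $\Gamma\leq_{\sf dom}\widetilde\Gamma$. The most delicate point I would have to handle is the \emph{re-sorting of the list of columns} clause in the definition of a box move, but this is harmless: each partition $\widetilde\gamma^{(i)}$ depends on $\widetilde\Gamma$ only through the multiset of column heights in the sub-diagram carrying entries $\le i$, and re-sorting permutes columns of equal length without altering this multiset.
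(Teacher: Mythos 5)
Your proposal is correct and takes essentially the same approach as the paper: reduce to a single box move, observe that $\gamma^{(i)}=\widetilde\gamma^{(i)}$ outside the index range between the two exchanged entries, and argue via partial sums of transposed partitions that moving a single skew box to a higher row can only increase those partial sums, hence gives $\gamma^{(i)}\leq_{\sf nat}\widetilde\gamma^{(i)}$ in the middle range. Your added remarks about the horizontal-strip hypothesis forcing one skew box per column and about re-sorting being harmless are details the paper leaves implicit but which are consistent with its argument.
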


  \noindent {\bf Proof.}
Suppose the LR-tableau 
$\widetilde\Gamma=[\widetilde\gamma^{(0)},\ldots,\widetilde\gamma^{(t)}]$ 
is obtained from $\Gamma=[\gamma^{(0)},\ldots,\gamma^{(t)}]$ by a~box move
based on entries $i$ and $j$ with, say, $i<j$.
The process of reordering the entries in each row
will not affect entries less than $i$ or larger than $j$, so
the partitions $\gamma^{(0)},\ldots,\gamma^{(i-1)}$,
and $\gamma^{(j)},\ldots,\gamma^{(t)}$ remain unchanged.
The partitions $\gamma^{(\ell)}$, $\widetilde\gamma^{(\ell)}$ for $i\leq \ell<j$ 
are different and satisfy $\gamma^{(\ell)}<_{\sf nat}\widetilde\gamma^{(\ell)}$ 
(since the defining partial sums can only increase).  
This shows that $\Gamma<_{\sf dom}\widetilde\Gamma$.
  \epv

The converse does not always hold, not even for horizontal strips:

\begin {ex}

Let $\beta=(4,3,3,2,1)$, $\gamma=(3,2,2,1)$ and $\alpha=(3,2)$. 
We have seen that there are two
LR-tableaux of type $(\alpha,\beta,\gamma)$.
 They are incomparable in $\leq_{\sf box}$-relation, but
$$ \setcounter{boxsize}{3}
\begin{picture}(18,12)(0,3)
\multiput(0,12)(3,0)5{\smbox}
\put(12,12){\numbox{1}}
\multiput(0,9)(3,0)4{\smbox}
\put(9,9){\numbox{2}}
\multiput(0,6)(3,0)3{\smbox}
\put(3,6){\numbox{1}}
\put(6,6){\numbox{3}}
\put(0,3){\numbox{2}}
\end{picture}
\raisebox{4mm}{$\leq_{\sf dom}$}\;\;
\setcounter{boxsize}{3}
\begin{picture}(18,12)(0,3)
\multiput(0,12)(3,0)5{\smbox}
\put(12,12){\numbox{1}}
\multiput(0,9)(3,0)4{\smbox}
\put(9,9){\numbox{1}}
\multiput(0,6)(3,0)3{\smbox}
\put(3,6){\numbox{2}}
\put(6,6){\numbox{2}}
\put(0,3){\numbox{3}}
\end{picture}
$$
 \end {ex}

However for horizontal and vertical strips, the two partial orders are equivalent
\cite{kst}:

\begin {thm}\label{theorem-dom-box}
Suppose $\alpha$, $\beta$, $\gamma$ are partitions such that $\beta\setminus\gamma$
is a~horizontal and vertical strip.
Then the two partial orders $\leq_{\sf dom}$, $\leq_{\sf box}$ are equivalent
on $\mathcal T_{\alpha,\gamma}^\beta$. \qed
\end {thm}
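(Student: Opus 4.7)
The direction $\leq_{\sf box}\Rightarrow\leq_{\sf dom}$ is the content of the preceding lemma, so I need only establish the converse. My approach is algorithmic: assuming $\Gamma<_{\sf dom}\widetilde\Gamma$, I would exhibit a single box move $\Gamma\to\Gamma'$ with $\Gamma<_{\sf box}\Gamma'\leq_{\sf dom}\widetilde\Gamma$. Since $\mathcal T_{\alpha,\gamma}^\beta$ is finite and $<_{\sf box}$ is strict, iterating must eventually reach $\widetilde\Gamma$, yielding $\Gamma\leq_{\sf box}\widetilde\Gamma$.

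\textbf{Locating the swap.} Let $i$ be the smallest index with $\gamma^{(i)}\neq\widetilde\gamma^{(i)}$; then $\gamma^{(i-1)}=\widetilde\gamma^{(i-1)}$ and the two tableaux first differ in where they place the label-$i$ entries inside the common region $\beta\setminus\gamma^{(i-1)}$. The horizontal-and-vertical strip hypothesis means that $\beta\setminus\gamma$, hence also $\beta\setminus\gamma^{(i-1)}$, is an antichain of boxes: at most one in each row and each column. Writing $R,\widetilde R$ for the row-sets carrying label $i$ in $\Gamma$ and $\widetilde\Gamma$ respectively, we have $|R|=|\widetilde R|=\alpha'_i$ but $R\neq\widetilde R$. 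The strictness in $\gamma^{(i)}\leq_{\sf nat}\widetilde\gamma^{(i)}$ then forces the existence of a pair $r\in\widetilde R\setminus R$ and $r'\in R\setminus\widetilde R$ with $r$ strictly higher than $r'$. In $\Gamma$, row $r$ carries some label $j>i$ and row $r'$ carries the label $i$; exchange these two entries to obtain the candidate $\Gamma'$.

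\textbf{Verification.} Because the smaller entry $i$ lands in the higher row $r$, the exchange is a legitimate box move, so $\Gamma<_{\sf box}\Gamma'$. Two items remain. First, $\Gamma'$ must still be an LR-tableau: under the h\&v strip hypothesis the lattice-permutation axiom collapses to a ballot condition on the totally ordered sequence of rows carrying boxes, and this is preserved provided one makes an extremal choice of $(r,r')$--e.g., taking $r$ as the highest row of $\widetilde R\setminus R$ and $r'$ as the lowest row of $R\setminus\widetilde R$ lying below $r$. Second, $\Gamma'\leq_{\sf dom}\widetilde\Gamma$: the partitions $(\gamma')^{(\ell)}$ coincide with $\gamma^{(\ell)}$ for $\ell<i$ and for $\ell\geq j$, while for $i\leq\ell<j$ they are obtained from $\gamma^{(\ell)}$ by sliding exactly one box from the lower row $r'$ to the higher row $r$, which is a monotone step toward $\widetilde\gamma^{(\ell)}$ in the natural order.

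\textbf{Main obstacle.} The delicate point is the extremal choice of $(r,r')$: one needs a pair that is simultaneously dominance-compatible toward $\widetilde\Gamma$ and compatible with the lattice-permutation axiom. The vertical part of the h\&v hypothesis is indispensable here--it reduces the LR condition to a one-dimensional ballot problem on a totally ordered index set of rows, which is what allows the extremal swap to stay within $\mathcal T_{\alpha,\gamma}^\beta$. Without the vertical-strip assumption the natural candidate swap can violate LR, and in fact $\leq_{\sf dom}$ strictly exceeds $\leq_{\sf box}$ in general, as shown in Section~\ref{section-bound-dom-not-equiv}.
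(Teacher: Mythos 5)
The paper does not actually prove this theorem here: the one nontrivial direction ($\leq_{\sf dom}\Rightarrow\leq_{\sf box}$ for horizontal-and-vertical strips) is attributed to \cite{kst}, and only an algorithm is reproduced, explicitly ``without any proof of its correctness.'' That algorithm descends from $\widetilde\Gamma$ toward $\Gamma$, producing $\widehat\Gamma$ with $\widehat\Gamma<_{\sf box}\widetilde\Gamma$ and $\Gamma\leq_{\sf dom}\widehat\Gamma$; your proposal takes the dual ascending route, from $\Gamma$ toward $\widetilde\Gamma$. Either direction is a legitimate strategy in principle.

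The genuine gap is in the ``Verification'' paragraph. You must exhibit a pair $(r,r')$ such that the swap (i) produces an LR-tableau and (ii) keeps $\Gamma'\leq_{\sf dom}\widetilde\Gamma$, and you assert both on the basis of an extremal choice $r=\min(\widetilde R\setminus R)$, $r'=\max(R\setminus\widetilde R)$ without proof. That choice in fact fails~(ii). Take a diagonal skew strip with one box in each of rows $1,\dots,6$ and $\alpha'=(3,3)$ (so three $\singlebox 1$'s and three $\singlebox 2$'s). Let $\Gamma$ put the $\singlebox 1$'s in rows $R=\{1,3,5\}$ and $\widetilde\Gamma$ in rows $\widetilde R=\{1,2,4\}$, the $\singlebox 2$'s filling the complementary rows. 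Both are LR-tableaux and the prefix counts $(1,1,2,2,3,3)$ versus $(1,2,2,3,3,3)$ give $\Gamma<_{\sf dom}\widetilde\Gamma$. Your recipe gives $r=\min\{2,4\}=2$ and $r'=\max\{3,5\}=5$, hence $R'=\{1,2,3\}$ with prefix counts $(1,2,3,3,3,3)$ — at position $3$ this exceeds $\widetilde R$'s count $2$, so $\Gamma'\not\leq_{\sf dom}\widetilde\Gamma$. (By contrast, $r=4,r'=5$ or $r=2,r'=3$ would each give a valid intermediate tableau.) So the extremal choice you propose overshoots; the correct choice rule must ensure no element of the symmetric difference of the wrong type lies strictly between $r$ and $r'$, which is exactly what the minimality conditions in Steps~2--4 of the paper's algorithm are arranging, and which is a nontrivial lemma proved in~\cite{kst}.

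In short: your overall plan matches the paper's (iterating a single box move while controlling dominance), but the step that does all the work — the selection and verification of the admissible box move — is incorrect as stated, so the LR-preservation and dominance-preservation claims remain unproved.
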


In \cite{kst} we present two proofs of the fact that $\leq_{\sf dom}$ implies $\leq_{\sf box}$
(for horizontal and vertical strips). Both are algorithmic. Below we present one of these algorithms 
without any proof of its correctness.
The reader is referred to \cite{kst} for details and proofs.

 \begin{algorithm} For an LR-tableau $\Gamma$ we denote by $\omega(\Gamma)$ the
list of entries when read from left to right.
Clearly, $\Gamma$ is determined uniquely by 
its shape and by the list of its entries.

{\bf Input:} Two LR-tableaux $\Gamma$, $\widetilde\Gamma$ of shape $(\alpha,\beta,\gamma)$ such that
$\beta\setminus\gamma$ is a~horizontal and vertical strip and
such that $\Gamma<_{\sf dom}\widetilde\Gamma$. 

{\bf Output:} An LR-tableau $\widehat\Gamma$ of the shape $(\alpha,\beta,\gamma)$ such that
$\Gamma\leq_{\sf dom}\widehat{\Gamma}$ and $\widehat{\Gamma}<_{\sf box}\widetilde{\Gamma}$.

{\bf Step 1.} Find the smallest $k$ such that $\omega(\Gamma)_k\neq\omega(\widetilde\Gamma)_k$ and put
   $x=\omega(\Gamma)_k$. 
 
{\bf Step 2.} Choose the minimal $m\geq k+1$ such that $x=\omega(\widetilde\Gamma)_m$. 

{\bf Step 3.} Let $y=\min\{\omega(\widetilde\Gamma)_i>x:k\leq i < m\}$.

{\bf Step 4.} Choose $k\leq l < m$ such that $y=\omega(\widetilde\Gamma)_l$.

{\bf Step 5.} Define $\widehat\Gamma$ such that $\omega(\widehat\Gamma)_i=\omega(\widetilde\Gamma)_i$,
for $i\neq l,m$, and $\omega(\widehat\Gamma)_l=x$, $\omega(\widehat\Gamma)_m=y$. 

\end{algorithm}

\smallskip

\begin {ex} Let $\beta=(6,5,4,3,2,1)$, $\gamma=(5,4,3,2,1)$ and $\alpha=(3,2,1)$.
Consider two LR-tableaux $\Gamma$ and $\widetilde\Gamma$ of the shape $(\alpha,\beta,\gamma)$
such that $\omega(\Gamma)=(1,3,2,2,1,1)$ and $\omega(\widetilde\Gamma)=(2,3,2,1,1,1)$.
It is straightforward to check that $\beta\setminus\gamma$ is a~horizontal and vertical strip
and $\Gamma<_{\sf dom}\widetilde\Gamma$. 
$$\raisebox{5mm}{$\Gamma:$} \quad
\begin{picture}(18,15)(0,3)
\multiput(0,15)(3,0)5{\smbox}
\put(15,15){\numbox{1}}
\multiput(0,12)(3,0)4{\smbox}
\put(12,12){\numbox{1}}
\multiput(0,9)(3,0)3{\smbox}
\put(9,9){\numbox{2}}
\multiput(0,6)(3,0)2{\smbox}
\put(6,6){\numbox{2}}
\put(0,3){\smbox}
\put(3,3){\numbox{3}}
\put(0,0){\numbox{1}}
\end{picture}
\quad\raisebox{5mm}{$<_{\sf dom}$} \;\;\;
\raisebox{5mm}{$\widetilde\Gamma:$} \quad
\begin{picture}(18,15)(0,3)
\multiput(0,15)(3,0)5{\smbox}
\put(15,15){\numbox{1}}
\multiput(0,12)(3,0)4{\smbox}
\put(12,12){\numbox{1}}
\multiput(0,9)(3,0)3{\smbox}
\put(9,9){\numbox{1}}
\multiput(0,6)(3,0)2{\smbox}
\put(6,6){\numbox{2}}
\put(0,3){\smbox}
\put(3,3){\numbox{3}}
\put(0,0){\numbox{2}}
\end{picture}
$$
We apply the algorithm. Note that $k=1$, $x=1$ and $m=4$.
Now we can choose $y=\omega(\widetilde\Gamma)_1=2$ or $y=\omega(\widetilde\Gamma)_3=2$.
If we choose $y=\omega(\widetilde\Gamma)_1$, then $\widehat\Gamma=\Gamma$. In the second case,
i.e. if $y=\omega(\widetilde\Gamma)_3$, we get $\omega(\widehat\Gamma)=(2,3,1,2,1,1)$. It is easy to see that
$\Gamma<_{\sf dom}\widehat\Gamma$ and we can continue. 
\end {ex}

\subsection{The LR-tableau of a~short exact sequence}

\begin{notation}
By a~{\it nilpotent operator} we understand a~pair $(V,T)$ where
$V$ is a~finite dimensional $k$-vector space and $T:V\to V$ 
a~$k$-linear nilpotent operator. 
Each such pair is determined uniquely, up to isomorphy, by the partition
$\alpha=(\alpha_1,\ldots,\alpha_s)$ which records the sizes of the Jordan
blocks. We consider $(V,T)$ as the module over the 
polynomial ring $$N_\alpha:=\bigoplus_{i=1}^s k[T]/(T^{\alpha_i}).$$
Conversely, given a~$k[T]$-module $M$ on which the variable $T$ acts nilpotently, the 
transpose of the partition $\beta$ such that $M\cong N_\beta$
is given by $\beta'_\ell=\dim\frac{T^{\ell-1}M}{T^\ell M}$.

Given three partitions $\alpha$, $\beta$, $\gamma$, there is a~short exact
sequence $E:0\to N_\alpha\to N_\beta\to N_\gamma\to 0$ if and only if
there is an LR-tableau of shape $(\alpha,\beta,\gamma)$ \cite{gk}.
The {\it tableau} $\Gamma$ {\it corresponding to the sequence} $E$ is obtained as follows.
Let $B$ be the $k[T]$-module $N_\beta$ and $A$ the submodule given by the
image of the monomorphism $N_\alpha\to N_\beta$. The partitions defining
$\Gamma=[\gamma^{(0)},\ldots,\gamma^{(t)}]$, where $t=\alpha_1$, are obtained as the isomorphism
types of the nilpotent operators
\cite[II, (1.4)]{macd}: $$N_{\gamma^{(i)}}= B/T^iA.$$
\end{notation}

\begin {defin}
Given two partitions $\gamma$, $\widetilde\gamma$, the {\it union}
$\gamma\cup\widetilde\gamma$ has as Young diagram the sorted union of the columns
in the Young diagrams for $\gamma$ and $\widetilde\gamma$, in symbols,
$(\gamma\cup\widetilde\gamma)'_i= \gamma'_i+\widetilde\gamma'_i$.

\smallskip
For two tableaux $\Gamma=[\gamma^{(0)},\ldots,\gamma^{(s)}]$, 
$\widetilde\Gamma=[\widetilde\gamma^{(0)},\ldots,\widetilde\gamma^{(t)}]$,
the {\it union} of the tableaux is given 
rowwise:
$$\Gamma\cup\widetilde\Gamma\;=\;
  [\gamma^{(0)}\cup\widetilde\gamma^{(0)},\ldots,\gamma^{(m)}\cup\widetilde\gamma^{(m)}]$$
where $m=\max\{s,t\}$ and $\gamma^{(i)}=\gamma^{(s)}$ for $i>s$ and 
$\widetilde\gamma^{(i)}=\widetilde\gamma^{(t)}$ for $i>t$.
\end {defin}

\begin {lem}\label{lemma-lr-sum}
Suppose the exact sequences $E$, $\widetilde E$ have LR-tableaux $\Gamma$, $\widetilde\Gamma$,
respectively.  Then the LR-tableau of the direct sum $E\oplus \widetilde E$
is $\Gamma\cup\widetilde\Gamma$.
\end {lem}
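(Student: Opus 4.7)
The plan is to unwind the two definitions in parallel and check that they agree level by level. Write $E:0\to A\to B\to B/A\to 0$ and $\widetilde E:0\to \widetilde A\to \widetilde B\to \widetilde B/\widetilde A\to 0$, where $A\subseteq B=N_\beta$ is the image of the first monomorphism and $\widetilde A\subseteq \widetilde B=N_{\widetilde\beta}$ the image of the second. Then the direct sum sequence $E\oplus\widetilde E$ has $A\oplus\widetilde A$ sitting inside $B\oplus\widetilde B$ as its kernel subspace.

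Since $T$ acts componentwise on $B\oplus\widetilde B$, for each $i\geq 0$ we have $T^i(A\oplus\widetilde A)=T^iA\oplus T^i\widetilde A$, hence
\[
 (B\oplus\widetilde B)/T^i(A\oplus\widetilde A)\;\cong\;B/T^iA\;\oplus\;\widetilde B/T^i\widetilde A\;\cong\;N_{\gamma^{(i)}}\oplus N_{\widetilde\gamma^{(i)}}
\]
by the definition of the LR-tableau for each summand. The next ingredient is the elementary fact $N_\mu\oplus N_\nu\cong N_{\mu\cup\nu}$ for any partitions $\mu,\nu$: by the definition $N_\mu=\bigoplus_i k[T]/(T^{\mu_i})$, taking a direct sum simply concatenates the list of Jordan blocks, and sorting this concatenation is precisely the column-wise union $\mu\cup\nu$. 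Combining these two observations, the $i$-th partition associated with $E\oplus\widetilde E$ is $\gamma^{(i)}\cup\widetilde\gamma^{(i)}$, matching the $i$-th entry of $\Gamma\cup\widetilde\Gamma$ by definition.

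The only minor subtlety concerns the length of the tableau. If $\Gamma$ has length $s=\alpha_1$ and $\widetilde\Gamma$ has length $t=\widetilde\alpha_1$ with $s<t$, then for $s<i\leq t$ we have $T^iA=0$ (because $A\cong N_\alpha$ is annihilated by $T^s$), so $B/T^iA=B=N_{\gamma^{(s)}}$; this is exactly the convention $\gamma^{(i)}=\gamma^{(s)}$ used to define $\Gamma\cup\widetilde\Gamma$. The overall length of the tableau is $\max(s,t)$, which equals the largest part of the kernel partition $\alpha\cup\widetilde\alpha$ of $E\oplus\widetilde E$, as required. No genuine obstacle arises; the argument reduces entirely to the fact that forming quotients commutes with direct sums and that Jordan types of nilpotent $k[T]$-modules add by partition union.
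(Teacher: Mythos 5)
Your proof is correct and follows the same route as the paper: write both short exact sequences as embeddings $A\subset B$, $\widetilde A\subset\widetilde B$, observe that $(B\oplus\widetilde B)/T^j(A\oplus\widetilde A)\cong B/T^jA\oplus\widetilde B/T^j\widetilde A$, and identify the Jordan type of this direct sum as the union of partitions. You merely spell out in more detail (the componentwise $T$-action, the identity $N_\mu\oplus N_\nu\cong N_{\mu\cup\nu}$, and the handling of unequal tableau lengths) what the paper's two-line proof leaves implicit.
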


  \noindent {\bf Proof.}
Suppose $E$, $\widetilde E$ are given by the embeddings $A\subset B$, $\widetilde A\subset \widetilde B$.
The $j$-th partition in the LR-tableau for $E\oplus\widetilde E$ is the Jordan type
for $B/T^jA \oplus \widetilde B/T^j\widetilde A$, which is $\gamma^{(j)}\cup\widetilde\gamma^{(j)}$.
  \epv

Thus, the LR-tableau of a~direct sum is obtained by merging the rows of the LR-tableaux
of the summands, starting at the top, and by sorting the entries in each row.

We present a~formula for the number $\mu_{\ell,r}$ of 
boxes $\singllebox\ell$ in the $r$-th row in the LR-tableau 
$\Gamma=[\gamma^{(0)},\ldots,\gamma^{(t)}]$ of an embedding $(A\subset B)$.
We refer to \cite[Theorem~1]{s-entries} for a~module-theoretic and homological
interpretation of this number.

\smallskip
Denote by $\gamma_{\leq r}=(\gamma'_1,\ldots,\gamma'_r)'$ the partition 
which consists of the first $r$ rows of $\gamma$.
Thus, if a~$k[T]$-module $C$ has type $\gamma$, then $C/T^rC$ has type 
$\gamma_{\leq r}$.
In particular, the first $r$ rows of the partitions $\gamma^{(\ell)}$ are given as follows.
\begin{equation} 
  \gamma^{(\ell)}_{\leq r} = \type\frac B{T^\ell A+T^r B}
\label{equation-defining-type}\end{equation}

\smallskip
As an immediate consequence, the number of boxes $\singllebox\ell$ in the first $r$ rows of $\Gamma$
is given by
$$| \gamma^{(\ell)}_{\leq r} \setminus \gamma^{(\ell-1)}_{\leq r} |
    =  \dim \frac{T^{\ell-1}A+T^r B}{T^\ell A+T^r B},$$
and the formula for $\mu_{\ell,r}$ is as follows.
\begin{eqnarray}
  \mu_{\ell,r}(A\subset B)
              & = & |\gamma^{(\ell)}_{\leq r}\setminus\gamma^{(\ell-1)}_{\leq r}| -
                    |\gamma^{(\ell)}_{\leq r-1}\setminus \gamma^{(\ell-1)}_{\leq r-1}| \\
              & = & \dim \frac{T^{\ell-1}A+T^rB}{T^\ell A+T^rB}
                    - \dim\frac{T^{\ell-1}A+T^{r-1}B}{T^\ell A+T^{r-1}B} \nonumber
\label{formula-mu}\end{eqnarray}

In the remainder of this section we study two types of examples.

\subsection{Example 1: Pickets}

\begin {defin} 
A short exact sequence $E: 0\to A\to B\to C\to 0$ is a~{\it picket}
if $B$ is indecomposable as a~$k[T]$-module (so the partition $\beta$
has only one part).  A picket $E$ is {\it empty} if $A=0$.  
\end {defin}

\begin {rem}
Recall that the invariant subspaces of a~linear operator with only one Jordan
block are determined uniquely by their dimension.
As a~consequence, a~picket $E$ as above is determined uniquely, up to isomorphy, by 
the dimensions $n=\dim B$ and $m=\dim A$.  We write
$$P^n_m\quad:=\quad (\;0\to (T^{n-m})\subset k[T]/(T^n)\to k[T]/(T^{n-m})\to 0\;).$$
\end {rem}

We picture pickets as follows. 
In the diagram, the column represents the Jordan block of $B$ and the dot
in the $(n-m+1)$-st box the submodule generator $T^{n-m}$ in $B$.
$$
\setlength\unitlength{1mm}
\raisebox{7mm}{$P_2^5:$} \quad
\begin{picture}(9,15)
  \multiput(0,0)(0,3)5{\smbox}
  \put(1.5,4.5)\sbullet
\end{picture}
\qquad\qquad
\raisebox{7mm}{$\Gamma:$} \quad
\begin{picture}(9,15)
  \multiput(0,0)(0,3)5{\smbox}
  \put(0,0){\numbox2}
  \put(0,3){\numbox1}
\end{picture}
$$

To determine the LR-tableau $\Gamma=[\gamma^{(0)},\ldots,\gamma^{(t)}]$ of a~picket, note that $t=m$, 
$\gamma^{(0)}=\type B/A=(n-m)$, $\gamma^{(1)}=\type B/TA=(n-m+1)$, $\ldots$, $\gamma^{(m)}=\type B=(n)$.

\subsection{Example 2: Poles}

\begin {defin}
A short exact sequence $E: 0\to A\to B\to C\to 0$ is a~{\it pole} if $A$ is indecomposable
as a~$k[T]$-module and $E$ is indecomposable as a~short exact sequence. 
\end {defin}

Poles have been classified, up to isomorphy, by Kaplansky \cite[Theorem~24]{kaplansky}.

\begin {thm}\label{theorem-Kaplansky}
  A pole with submodule generator $a$
  is determined uniquely, up to isomorphy, by the radical layers of the elements
  $T^ia$.
\qed 
\end {thm}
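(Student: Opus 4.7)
The plan is to decompose $B$ into Jordan blocks, normalize the generator $a$ via automorphisms of $B$, and then use indecomposability of the sequence to establish a bijection between the blocks of $B$ and the ``events'' in the radical layer sequence, from which the isomorphism class can be read off.

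First I would set up the invariants. Since $A$ is indecomposable, $A\cong k[T]/(T^m)$ where $m$ is the order of $a$. Decompose $B=\bigoplus_{j=1}^{s} B_j$ with $B_j\cong k[T]/(T^{n_j})$ via chosen generators $e_j$, and write $a=\sum_{j} c_j(T)e_j$. Let $\lambda_j$ denote the $T$-adic valuation of $c_j$, so $c_j=T^{\lambda_j}u_j$ with $u_j$ a unit modulo $T^{n_j-\lambda_j}$. Then the component of $T^{i}a$ in $B_j$ is supported at layer $\lambda_j+i$ when $\lambda_j+i<n_j$ and vanishes otherwise. Consequently the radical layer of $T^{i}a$ in $B$ is $\ell_i=\min\{\lambda_j+i : \lambda_j+i<n_j\}$, a minimum taken over the ``still-living'' blocks.

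Next I would normalize $a$ under the action of $\Aut_{k[T]}(B)$. Within each block, invertibility of $u_j$ lets me replace $c_j$ by the pure power $T^{\lambda_j}$; using shear automorphisms between distinct blocks, nested contributions can be absorbed, so that whenever $[\lambda_j,n_j)\subseteq[\lambda_{j'},n_{j'})$ the inner contribution is removable. The indecomposability of the sequence $E$ is essential at this point: any block of $B$ whose contribution could be eliminated would split off as a direct summand of the entire short exact sequence, contradicting that $E$ is a pole. The outcome is a canonical form $a=\sum_{j=1}^{s}T^{\lambda_j}e_j$ in which the intervals $[\lambda_j,n_j)$ are pairwise incomparable under inclusion, and every block is essential; after sorting, both $\lambda_j$ and $n_j$ are strictly increasing in $j$.

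Under this canonical form, the data $\{(\lambda_j,n_j)\}$ can be read off bijectively from the radical layer sequence: each block corresponds either to an internal jump $\ell_{i+1}-\ell_i\geq 2$ (when a short-lived block dies and a slower block takes over) or to the terminal event $T^{m}a=0$ (the last block dying). Given $(\ell_0,\ldots,\ell_{m-1})$ together with the terminal event, one recovers the $\lambda_j$ as the layers appearing immediately after each jump and the $n_j$ as the layers at which each block expires. Two poles with identical radical layer sequences therefore have identical canonical forms, and a map sending generator to generator extends uniquely to an isomorphism of short exact sequences. The main obstacle is carrying out the normalization rigorously: the shear automorphisms on different blocks must be coordinated so that simplifying one $c_j$ does not disturb the others. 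I expect this to be handled most cleanly by induction on the number of blocks, peeling off at each stage the block of smallest $\lambda_j$, whose normalization commutes with the shears affecting the remaining blocks.
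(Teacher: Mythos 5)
The paper attributes this statement to Kaplansky and gives no proof, so the comparison can only be against your argument on its own terms. Your overall plan --- put $a$ into a canonical form using shear automorphisms of $B$, then read off the data from the staircase of radical layers --- is the right shape, but the canonical-form criterion you state is too weak, and the recovery step silently uses a stronger property that you never establish. You assert that the normalized data is characterized by the intervals $[\lambda_j,n_j)$ being pairwise incomparable under inclusion. That is necessary but not sufficient. Take $B=k[T]/(T^5)\oplus k[T]/(T^6)$ and $a=e_1+T^3e_2$, so $(\lambda_1,n_1)=(0,5)$ and $(\lambda_2,n_2)=(3,6)$: neither of $[0,5)$, $[3,6)$ contains the other. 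Nevertheless the shear $\phi(e_1)=e_1-T^3e_2$, $\phi(e_2)=e_2$ is a well-defined automorphism of $B$ (because $T^8e_2=0$) and sends $a$ to $e_1$, so the embedding decomposes as $P^5_5\oplus P^6_0$ and is not a pole.

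The condition that actually follows from ``no shear removes a block'' is that, after sorting $\lambda_1<\lambda_2<\cdots<\lambda_s$, the lengths $n_j-\lambda_j$ are strictly increasing (which in turn forces $n_1<\cdots<n_s$). Indeed for $\lambda_j<\lambda_{j'}$, the shear $e_j\mapsto e_j-T^{\lambda_{j'}-\lambda_j}e_{j'}$ is a well-defined endomorphism of $B$ exactly when $n_j-\lambda_j\ge n_{j'}-\lambda_{j'}$, and when it is, it eliminates block $j'$ from $a$. Your recovery argument needs the blocks to expire in order of index (``a short-lived block dies and a slower block takes over''); this is precisely the strictly-increasing-lengths statement and does not follow from incomparability alone. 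Once the criterion is corrected, the jumps $\ell_{i+1}-\ell_i\ge 2$ and the terminal event do pin down the pairs $(\lambda_j,n_j)$ as you describe and the proof closes; but as written the normalization step has a genuine gap.
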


For a nonempty, strictly increasing sequence $S=(x_0,\ldots,x_{L-1})$
of nonnegative integers we construct the pole $P(S)$ for which the submodule 
generator $a$ satisfies that each $T^ia$ occurs in the $x_i$-st power of the radical.

\smallskip
Partition the sequence into intervals of subsequent numbers,
$$S=(y_1,y_1+1,\ldots,y_1+\ell_1-1,y_2,\ldots,y_2+\ell_2-1,\ldots,y_u,\ldots,y_u+\ell_u-1),$$
so $y_{i+1}>y_i+\ell_i$ for $1\leq i<u$.  Let $\beta$ be the partition
$\beta=(y_u+\ell_u,y_{u-1}+\ell_{u-1},\ldots,y_1+\ell_1)$, and put $B=N_\beta$
and $a=(T^{y_u-\ell_{u-1}-\cdots-\ell_1},\ldots,T^{y_2-\ell_1},T^{y_1})\in B$.
Then $A=(a)$ is an indecomposable $k[T]$-module and 
$P(S):0\to A\subset B\to B/A\to 0$ is an indecomposable short exact sequence such
that for each $i$, $0\leq i<L$, the element $T^ia$ is in the 
$x_i$-th radical of $B$.

\smallskip
The LR-tableau for $P(S)=[\gamma^{(0)},\ldots,\gamma^{(L)}]$ is easily computed 
as $\gamma^{(i)}\setminus\gamma^{(i-1)}$ consists of a single box $\singllebox i$
in row $x_{i-1}+1$.

\smallskip
For examples, note that each picket $P_\ell^m$ with $\ell>0$ is a pole,
more precisely, $P_\ell^m=P(m-\ell,m-\ell+1,\ldots,m-1)$.
We picture here the poles $P(0,2,3)$ and $P(0,1,3)$
and their LR-tableaux as they will occur in an example below. 
For the first pole, $\beta=(4,1)$, $a=(T,1)$; for the second $\beta=(4,2)$, $a=(T,1)$.
$$
\qquad\qquad
\begin{picture}(6,12)(0,0)
      \put(-20, 5){$P(0,2,3):$}
      \multiput(0,0)(0,3)4{\smbox}
      \multiput(3,6)(0,3)1{\smbox}
      \put(1.5,7.5)\sbullet
      \put(4.5,7.5)\sbullet
      \put(1.5,7.5){\line(1,0){3}}
    \end{picture}
\qquad\qquad\qquad
\begin{picture}(6,12)(0,0)
      \put(-15, 5){$\Gamma_{P(0,2,3)}:$}
      \multiput(0,0)(0,3)4{\smbox}
      \multiput(3,9)(0,3)1{\smbox}
      \put(1.5,1.5){\num3}
      \put(1.5,4.5){\num2}
      \put(4.5,10.5){\num1}
    \end{picture}
\qquad\qquad\qquad\qquad
\begin{picture}(6,12)(0,0)
      \put(-20, 5){$P(0,1,3):$}
      \multiput(0,0)(0,3)4{\smbox}
      \multiput(3,3)(0,3)2{\smbox}
      \put(1.5,7.5)\sbullet
      \put(4.5,7.5)\sbullet
      \put(1.5,7.5){\line(1,0){3}}
    \end{picture}
\qquad\qquad\qquad
\begin{picture}(6,12)(0,0)
      \put(-15, 5){$\Gamma_{P(0,1,3)}:$}
      \multiput(0,0)(0,3)4{\smbox}
      \multiput(3,6)(0,3)2{\smbox}
      \put(1.5,1.5){\num3}
      \put(4.5,7.5){\num2}
      \put(4.5,10.5){\num1}
    \end{picture}
$$

\section{The boundary relation and its properties} \label{section-closure}
In this section we present properties of the boundary relation defined in Formula \ref{eq-closure}.
\subsection{The boundary relation is anti-symmetric}
\label{section-closure2box}

We show that the boundary relation for LR-tableaux 
is anti-symmetric
 by verifying that it
implies the dominance order.  
This is Part (a) in Theorem \ref{theorem-bound-dom}.

\begin {lem}
\label{lem-closed-subset}
Suppose $A$, $B$ are vector spaces and 
$\mathcal M\subseteq\Hom_k(A,B)$ is a~set of monomorphisms.
For subspaces $U\subseteq A$, $V\subseteq B$ and a~natural number $n$,
the set $$ \{f\in\mathcal{M}\, :\;\; \dim(f(U)\cap V)\geq n\}$$
is closed in $\mathcal{M}$.
\end {lem}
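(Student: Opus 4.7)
The plan is to realize $\dim(f(U) \cap V) \geq n$ as a rank drop condition on a linear map that depends polynomially on $f$, and then pull back the standard fact that the locus of bounded rank is Zariski closed.

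First I would introduce, for each $f \in \mathcal{M}$, the auxiliary linear map
$$\phi_f : U \oplus V \longrightarrow B, \qquad (u,v) \longmapsto f(u) - v.$$
The kernel of $\phi_f$ consists of pairs $(u, f(u))$ with $f(u) \in V$, so it is isomorphic to $\{u \in U : f(u) \in V\} = f^{-1}(V) \cap U$. Because $f$ is a monomorphism on $A$ (in particular injective on $U$), the restriction $f|_U$ gives an isomorphism $f^{-1}(V) \cap U \cong f(U) \cap V$. Hence
$$\dim(f(U) \cap V) = \dim \ker \phi_f = (\dim U + \dim V) - \rank \phi_f.$$

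Next I would rephrase the condition: $\dim(f(U) \cap V) \geq n$ is equivalent to $\rank \phi_f \leq \dim U + \dim V - n$. Fixing bases of $U$, $V$, $B$, the entries of the matrix representing $\phi_f$ depend polynomially (in fact linearly) on the entries of $f$, since the block corresponding to the $V$-factor is the constant $-\id_V$ and the block corresponding to $U$ is given by the coordinates of $f$. Thus the assignment $f \mapsto \phi_f$ is a regular (morphism of varieties) map from $\Hom_k(A,B)$ into the space of matrices of the appropriate size.

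Finally I would invoke the classical fact that for any fixed $r$, the locus of matrices of rank at most $r$ is Zariski closed, being cut out by the vanishing of all $(r+1) \times (r+1)$ minors. Pulling back this closed set along the regular map $f \mapsto \phi_f$ shows that $\{f \in \Hom_k(A,B) : \rank \phi_f \leq \dim U + \dim V - n\}$ is closed in $\Hom_k(A,B)$, and intersecting with $\mathcal{M}$ yields the desired closed subset. No step looks like a real obstacle; the only subtlety to check carefully is that the identification $\ker \phi_f \cong f(U) \cap V$ uses the injectivity of $f$ on $U$, which is guaranteed because every element of $\mathcal M$ is a monomorphism on all of $A \supseteq U$.
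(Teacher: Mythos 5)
Your proof is correct and takes essentially the same route as the paper: both realize $\dim(f(U)\cap V)$ through the rank of a matrix depending linearly on $f$ and then invoke the Zariski-closedness of rank-bounded loci (vanishing of minors). The only cosmetic difference is that you package this as the auxiliary map $\phi_f\colon U\oplus V\to B$, $(u,v)\mapsto f(u)-v$, whereas the paper restricts $f$ to a map $U\to B/V$; the two ranks differ by the constant $\dim V$, and both arguments use the monomorphism hypothesis in the same essential way to convert the rank bound into the dimension bound on $f(U)\cap V$.
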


  \noindent {\bf Proof.}
Recall that for a~natural number $m$, the condition $\rank(f)> m$ defines an open subset
in $\Hom_k(A,B)$ since it is given by the non-vanishing of a~minor 
in the matrix representing $f$. 
By restricting that matrix to a~basis for $U$ and a~basis for the
complement of $V$, we see that the condition $\dim\frac{f(U)+V}V> m$
also defines an open subset in $\Hom_k(A,B)$. 
Let now $m=\dim U-n$.  From the isomorphism $\frac{f(U)+V}V\cong \frac{f(U)}{f(U)\cap V}$
we obtain that the subset defined by $\dim\frac{f(U)}{f(U)\cap V}>m$ is open,
in particular it is open when restricted to $\mathcal M$. 
Since on $\mathcal M$, all spaces $f(U)$ have the same dimension ($f$ is a~monomorphism),
the condition is equivalent to 
$$\dim f(U)\cap V < \dim f(U)-m = n.$$
The complementary condition $\dim f(U)\cap V\geq n$ defines a~closed subset of $\mathcal M$.
  \epv

\begin {prop}
\label{proposition-critereon4closed}
For all natural numbers $i$, $\ell$, $n$, the subset 
$$\bigcup\big\{ \mathbb V_\Gamma: \Gamma \;\text{satisfies}\;
   (\gamma^{(i)})'_1+\cdots+(\gamma^{(i)})'_\ell\geq n\big\}$$
in $\mathbb V_{\alpha,\gamma}^\beta(k)$ is closed.
\end {prop}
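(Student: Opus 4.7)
The plan is to express the stated numerical condition on $\Gamma(f)$ as a single dimension inequality that depends only on $f$, then invoke Lemma~\ref{lem-closed-subset}. First I would observe that the displayed union equals the subset
\[
  \{f\in\mathbb V_{\alpha,\gamma}^\beta(k)\;:\;(\gamma^{(i)}(f))'_1+\cdots+(\gamma^{(i)}(f))'_\ell\geq n\},
\]
so the task reduces to showing that this subset is closed.

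Next, I would translate the left-hand side into a dimension. By Formula~(\ref{equation-defining-type}), the sum $(\gamma^{(i)})'_1+\cdots+(\gamma^{(i)})'_\ell$ equals $|\gamma^{(i)}_{\leq\ell}|=\dim\frac{N_\beta}{T^if(N_\alpha)+T^\ell N_\beta}$, which is $\dim N_\beta-\dim(T^if(N_\alpha)+T^\ell N_\beta)$. Using that $f$ is a $k[T]$-linear monomorphism, $T^if(N_\alpha)=f(T^iN_\alpha)$ and $\dim f(T^iN_\alpha)=\dim T^iN_\alpha$, which is a constant as $f$ varies over $\mathbb V_{\alpha,\gamma}^\beta(k)$. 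Setting $U=T^iN_\alpha$ and $V=T^\ell N_\beta$, the standard identity
\[
  \dim(f(U)+V)=\dim f(U)+\dim V-\dim(f(U)\cap V)
\]
converts the condition $(\gamma^{(i)})'_1+\cdots+(\gamma^{(i)})'_\ell\geq n$ into
\[
  \dim(f(U)\cap V)\;\geq\; \dim T^iN_\alpha+\dim T^\ell N_\beta-\dim N_\beta+n,
\]
a uniform lower bound independent of the particular tableau $\Gamma$.

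Finally, I would apply Lemma~\ref{lem-closed-subset} with $\mathcal M=\mathbb V_{\alpha,\gamma}^\beta(k)$ and the subspaces $U\subseteq N_\alpha$ and $V\subseteq N_\beta$ above, with $n$ replaced by the integer on the right-hand side. The lemma yields at once that the set is closed in $\mathbb V_{\alpha,\gamma}^\beta(k)$, completing the proof.

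There is no real obstacle: the only thing that must be checked with care is that $\dim f(U)$ and $\dim V$ are indeed constant on $\mathbb V_{\alpha,\gamma}^\beta(k)$ (so that the translation from the $(\gamma^{(i)})'$ sum to a $\dim(f(U)\cap V)$ bound is valid uniformly), and this follows immediately from $f$ being a monomorphism of a fixed module $N_\alpha$ into a fixed module $N_\beta$.
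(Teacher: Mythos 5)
Your proof is correct and lands on the same key reduction as the paper's: rewrite $(\gamma^{(i)})'_1+\cdots+(\gamma^{(i)})'_\ell$ as $c+\dim\bigl(f(T^iN_\alpha)\cap T^\ell N_\beta\bigr)$ for a constant $c$ and invoke Lemma~\ref{lem-closed-subset}. The only difference is in the bookkeeping en route: you reach $\dim\frac{N_\beta}{T^if(N_\alpha)+T^\ell N_\beta}$ by citing the already-established Formula~(\ref{equation-defining-type}) and then apply the dimension formula for a sum of subspaces, whereas the paper derives the same quantity via $\dim\Hom_{k[T]}(B/f(T^iA),P^\ell)$ and the third isomorphism theorem. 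Both arrive at the identical closed-form condition, so the approaches are essentially the same.
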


  \noindent {\bf Proof.}
Suppose $f:A\to B$ is an embedding in $\mathbb V_\Gamma$.  Recall that the partitions
in $\Gamma$ are given by $B/f(T^iA)=N_{\gamma^{(i)}}$.

Also recall that $\dim\Hom_{k[T]}(N_{(\ell)},N_{(m)})=\min\{\ell,m\}=
\dim\frac{N_{(\ell)}}{T^mN_{(\ell)}}$.
Thus:
\begin{eqnarray*}
(\gamma^{(i)})'_1+\cdots+(\gamma^{(i)})'_\ell & = & \sum_j \min\{\gamma^{(i)}_j,\ell\}\\ 
                                     & = & \dim\Hom_{k[T]}(B/f(T^iA),N_{(\ell)})\\
                                     & = & \dim \frac{B/f(T^iA)}{T^\ell (B/f(T^iA))}\\
                                     & = & \dim \frac{B/f(T^iA)}{(T^\ell B+f(T^iA))/f(T^iA)}\\
\end{eqnarray*}
Using the isomorphism $\frac{T^\ell B+f(T^iA)}{f(T^iA)}\cong\frac{T^\ell B}{T^\ell B\cap f(T^iA)}$
we obtain
$$(\gamma^{(i)})'_1+\cdots+(\gamma^{(i)})'_\ell=\dim B-\dim f(T^iA)-\dim T^\ell B+\dim T^\ell B\cap f(T^iA).$$
Since $\dim B-\dim f(T^iA)-\dim T^\ell B=c$ is constant on $\mathbb V_{\alpha,\gamma}^\beta$,
Lemma \ref{lem-closed-subset} implies that the set
$$\bigcup\big\{ \mathbb V_\Gamma: (\gamma^{(i)})'_1+\cdots+(\gamma^{(i)})'_\ell\geq n\big\} = 
   \big\{f\in\mathbb V_{\alpha,\gamma}^\beta:\dim T^\ell B\cap f(T^iA)\geq n-c\big\}$$
is a~closed subset of $\mathbb V_{\alpha,\gamma}^\beta$.
  \epv

We can now show that the boundary relation implies the dominance order.\medskip

  \noindent {\bf Proof [of Part (a) of Theorem~\ref{theorem-bound-dom}%
]}
We assume that $\Gamma{\not\leq}_{\sf dom}\widetilde{\Gamma}$ and show that 
$\mathbb V_{\widetilde{\Gamma}}\cap\overline{\mathbb V}_\Gamma=\emptyset$.
By assumption, there exist $i$, $\ell$ such that 
$$n=(\gamma^{(i)})'_1+\cdots+(\gamma^{(i)})'_\ell>(\widetilde{\gamma}^{(i)})'_1+\cdots+(\widetilde{\gamma}^{(i)})'_\ell$$
holds.  By the proposition, 
$\mathbb U=\bigcup\big\{\mathbb V_{\widehat{\Gamma}}:(\hat{\gamma}^{(i)})'_1+\cdots+(\hat{\gamma}^{(i)})'_\ell\geq n\big\}$
is a~closed subset of $\mathbb V_{\alpha,\gamma}^\beta$ such that 
$$\mathbb V_\Gamma\subseteq \mathbb U\quad\text{and}\quad\mathbb U\cap\mathbb V_{\widetilde{\Gamma}}=\emptyset.$$
Thus, $\mathbb V_{\widetilde{\Gamma}}\cap\overline{\mathbb V}_\Gamma=\emptyset$.
  \epv

As a~consequence we obtain:

\begin {cor}
The boundary relation is reflexive and antisymmetric.\qed
\end {cor}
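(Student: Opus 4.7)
The plan is to verify the two properties stated in the corollary, namely reflexivity and antisymmetry. Both follow almost immediately: reflexivity is a direct observation about closures, while antisymmetry is an application of Part (a) of Theorem~\ref{theorem-bound-dom}, which was just proved.

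First, I would verify reflexivity. For any LR-tableau $\Gamma$ of shape $(\alpha,\beta,\gamma)$, the set $\mathbb V_\Gamma$ is a nonempty constructible subset of $\Hom_k(N_\alpha,N_\beta)$ by the Theorem of Green and Klein and the partition of $\mathbb V_{\alpha,\gamma}^\beta$ into the pieces $\mathbb V_\Gamma$ described in the introduction. Since $\mathbb V_\Gamma\subseteq\overline{\mathbb V}_\Gamma$, we have
\[
  \mathbb V_\Gamma\cap\overline{\mathbb V}_\Gamma \;=\; \mathbb V_\Gamma \;\neq\;\emptyset,
\]
so $\Gamma\leq_{\sf bound}\Gamma$.

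Next, for antisymmetry, suppose that $\Gamma\leq_{\sf bound}\widetilde\Gamma$ and $\widetilde\Gamma\leq_{\sf bound}\Gamma$. Applying Part (a) of Theorem~\ref{theorem-bound-dom} to each relation yields $\Gamma\leq_{\sf dom}\widetilde\Gamma$ and $\widetilde\Gamma\leq_{\sf dom}\Gamma$. The dominance order is a genuine partial order, since it is defined componentwise from the natural partial order on partitions, which is antisymmetric. Hence $\Gamma=\widetilde\Gamma$.

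There is no substantive obstacle: the corollary is a formal consequence of Part (a) combined with the elementary fact that $\mathbb V_\Gamma$ is nonempty and contained in its closure. The remark in the introduction already anticipates this: once antisymmetry is available, passing to the transitive closure $\leq^*_{\sf bound}$ upgrades the relation to a partial order on $\mathcal T_{\alpha,\gamma}^\beta$.
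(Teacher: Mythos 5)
Your proof is correct and matches what the paper has in mind: reflexivity is immediate from $\mathbb V_\Gamma\subseteq\overline{\mathbb V}_\Gamma$ together with non-emptiness of the strata $\mathbb V_\Gamma$, and antisymmetry follows by composing Part (a) of Theorem~\ref{theorem-bound-dom} with the fact that $\leq_{\sf dom}$ is antisymmetric (being built componentwise from the natural order on partitions, which an LR-tableau determines uniquely). The paper records exactly this reasoning in the sentence following Theorem~\ref{theorem-bound-dom} in the introduction, so the corollary is stated with an empty proof; your write-up simply spells it out.
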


We conclude this section with a~result for later use.

\begin {lem}\label{lemma-invariant-subspace}
Suppose $f,g:N_\alpha\to N_\beta$ are objects in $\mathbb V_{\alpha,\gamma}^\beta$.
Let $W$ be a~subspace of $N_\beta$ which is invariant under all 
automorphisms of $N_\beta$ as a~$k[T]$-module.
If $\mathcal O_f\subset\overline{\mathcal O}_g$ then 
$$\dim \Im f\cap W\geq \dim \Im g\cap W.$$
\end {lem}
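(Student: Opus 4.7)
The plan is to reduce the statement to the closedness result of Lemma~\ref{lem-closed-subset} after observing that, thanks to the $\Aut(N_\beta)$-invariance of $W$, the function $h\mapsto \dim(\Im h\cap W)$ is constant on $G$-orbits.

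First I would compute $\Im h$ for an arbitrary element $h=bga^{-1}$ of $\mathcal O_g$. Since $a^{-1}\in\Aut_{k[T]}(N_\alpha)$ is surjective, $\Im h = b(\Im g)$. Because $W$ is invariant under every automorphism of $N_\beta$, in particular $b(W)=W$, so
$$\Im h\cap W \;=\; b(\Im g)\cap b(W) \;=\; b(\Im g\cap W),$$
and hence $\dim(\Im h\cap W)=\dim(\Im g\cap W)$. Writing $n:=\dim(\Im g\cap W)$, this shows that the entire orbit $\mathcal O_g$ lies in the set
$$\mathbb F\;:=\;\bigl\{h\in \mathbb V_{\alpha,\gamma}^\beta : \dim\bigl(h(N_\alpha)\cap W\bigr)\geq n\bigr\}.$$

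Next I would invoke Lemma~\ref{lem-closed-subset}, applied with $\mathcal M=\mathbb V_{\alpha,\gamma}^\beta$, $U=N_\alpha$, $V=W$ and threshold $n$: the set $\mathbb F$ is closed in $\mathbb V_{\alpha,\gamma}^\beta$. Therefore $\overline{\mathcal O_g}\subseteq \mathbb F$, and from the assumption $\mathcal O_f\subseteq\overline{\mathcal O_g}$ we get $\mathcal O_f\subseteq \mathbb F$. Specializing to $f$ itself yields $\dim(\Im f\cap W)\geq n=\dim(\Im g\cap W)$, which is the desired inequality.

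There is really no substantial obstacle: the only point to check with care is the identity $\Im h\cap W = b(\Im g\cap W)$, which uses both directions of the $b$-invariance of $W$. Once this orbit-invariance is in hand, the conclusion is immediate from Lemma~\ref{lem-closed-subset}.
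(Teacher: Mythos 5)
Your proof is correct and follows essentially the same strategy as the paper's: observe that $\dim(\Im h\cap W)$ is constant on $G$-orbits thanks to the invariance of $W$, then apply the closedness of the set from Lemma~\ref{lem-closed-subset}. If anything, your formulation is slightly cleaner — the paper phrases the argument via a one-parameter family $h_\lambda$ degenerating from $g$ to $f$, whereas you argue directly that $\mathcal O_g$ lies in a closed set $\mathbb F$ and hence so does $\overline{\mathcal O_g}\supseteq\mathcal O_f$, sidestepping the need to invoke the existence of such a family.
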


Examples of possible invariant submodules of $N_\beta$ are the powers of the
radical $T^rN_\beta$, powers of the socle $T^{-s}0$, and their intersections
$T^rN_\beta\cap T^{-s}0$.

  \noindent {\bf Proof.}
Let $h_\lambda:N_\alpha\to N_\beta$ be a~one-parameter family of objects in
$\mathbb V_{\alpha,\gamma}^\beta$ such that $h_\lambda\cong g$ for $\lambda\neq 0$
and $h_0\cong f$.  Put $n=\dim \Im g\cap W$.

Any isomorphism $h_\lambda\cong g$ ($\lambda\neq 0$) induces an isomorphism
$\Im h_\lambda\cap W\cong \Im g\cap W$ since $W$ is invariant under 
automorphisms of $N_\beta$. By Lemma~\ref{lem-closed-subset}, the set
$$\big\{h\in\mathbb V_{\alpha,\gamma}^\beta : \dim \Im h\cap W\geq n\big\}$$
is closed in $\mathbb V_{\alpha,\gamma}^\beta$, so with $h_\lambda$, $\lambda\neq 0$,
also $h_0$ is in the set. 
This shows $\dim \Im f\cap W=\dim \Im h_0\cap W\geq n$.
  \epv

\subsection{The boundary relation and the dominance relation} 
\label{section-bound-dom-not-equiv}

We have seen in Section~\ref{section-closure2box} that the boundary relation
implies the dominance relation.  Here we give an example that 
in general, the boundary relation is strictly stronger than the dominance relation.

In this and in the following section, we determine all isomorphism types of objects
which realize a given tableau that has at most 4 rows.  Such objects occur in the category
$\mathcal S(4)$ studied in \cite[(6.4)]{rs-inv} of all pairs consisting of a nilpotent linear
operator with nilpotency index at most 4 and an invariant subspace.

\begin {lem}\label{lemma-s4}
Each object in the category $\mathcal S(4)$ is a direct sum of indecomposables.
There are 20 indecomposable objects, up to isomorphy:  Four empty pickets 
$P_0^1$, $\ldots$, $P_0^4$, fifteen poles $P(S)$, where $S$ is a non-empty
subset of $\{0,1,2,3,4\}$, and a remaining object $X$ which has the property
that the invariant subspace has two Jordan blocks:
$$\begin{picture}(6,12)(0,0)
      \put(-10, 5){$X:$}
      \multiput(0,0)(0,3)4{\smbox}
      \multiput(3,3)(0,3)2{\smbox}
      \put(1.5,7.5)\sbullet
      \put(4.5,7.5)\sbullet
      \put(4.5,4.5)\sbullet
      \put(1.5,7.5){\line(1,0){3}}
    \end{picture}
\qquad\qquad\qquad
\begin{picture}(6,12)(0,0)
      \put(-10, 5){$\Gamma_X:$}
      \multiput(0,0)(0,3)4{\smbox}
      \multiput(3,6)(0,3)2{\smbox}
      \put(1.5,4.5){\num1}
      \put(1.5,1.5){\num3}
      \put(4.5,7.5){\num2}
      \put(4.5,10.5){\num1}
    \end{picture}
$$\qed
\end {lem}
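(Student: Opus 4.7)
The plan is to reduce the statement to the known classification of indecomposables in $\mathcal S(4)$, citing \cite[(6.4)]{rs-inv} for completeness, and then to verify the explicit enumeration into the three families. Since every object of $\mathcal S(4)$ is finite dimensional over $k$, its endomorphism ring is finite dimensional and therefore semi-perfect, so $\mathcal S(4)$ is a Krull-Schmidt category and every object admits a unique decomposition into indecomposables with local endomorphism rings. This settles the direct sum assertion.

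Next I would enumerate the candidate indecomposables and confirm each is indecomposable. The four empty pickets $P_0^n$, $n=1,\dots,4$, are clearly indecomposable. For the poles, Theorem~\ref{theorem-Kaplansky} provides a bijection between isomorphism classes of poles in $\mathcal S(4)$ and strictly increasing sequences $x_1 < x_2 < \cdots < x_k$ of radical layers of $a, Ta, \ldots, T^{k-1}a$. Since the ambient module $B$ has nilpotency index at most $4$, the admissible layers lie in $\{0,1,2,3\}$, and such sequences are in bijection with non-empty subsets $S$ of that four-element set, giving exactly $2^4 - 1 = 15$ poles. Ten of these recover the non-empty pickets $P_m^n$ (those for which $S$ is a set of consecutive integers ending at $n-1$); the remaining five are poles with decomposable ambient space arising from "gaps" in $S$.

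For the exceptional object $X$, I would verify indecomposability directly. The submodule is generated by $Tx_4 + x_2$ and $Tx_2$ and has Jordan type $(3,1)$, so $X$ is not a pole. One can show $X$ is indecomposable either by computing $\End(X)$ and checking it is local, or by appealing to Lemma~\ref{lemma-lr-sum}: if $X = Y \oplus Z$ non-trivially, then $\Gamma_X = \Gamma_Y \cup \Gamma_Z$ would be the sorted row-wise union, and a short case analysis using the shape $\beta = (4,2)$, $\gamma = (2,2)$ of $\Gamma_X$ shows no such splitting into tableaux of (empty) pickets and poles in the above list is possible.

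The main obstacle is establishing completeness, i.e.\ that every indecomposable of $\mathcal S(4)$ is isomorphic to one of the listed objects. This is the content of the classification theorem \cite[(6.4)]{rs-inv}, which shows that $\mathcal S(n)$ has finite representation type for $n \leq 5$ and enumerates the indecomposables; a self-contained proof would require either Auslander-Reiten theory applied to $\mathcal S(4)$ or an explicit reduction by covering functors, both of which are beyond the scope of what should be reproved here. Once that is granted, the count $4 + 15 + 1 = 20$ matches, the three families are pairwise non-isomorphic (distinguished by their LR-tableaux and by the number of Jordan blocks of the submodule), and the enumeration is complete.
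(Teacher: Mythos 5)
Your proposal follows essentially the same route as the paper, which simply cites the classification of $\mathcal S(4)$ from \cite[(6.4)]{rs-inv} and offers no argument of its own; your elaborations (Krull--Schmidt from finite-dimensionality, the count of poles, and the direct check that $X$ is not a pole because its submodule has Jordan type $(3,1)$) are all sound additions to that citation, and your observation that the radical layers must lie in $\{0,1,2,3\}$ (so that the poles are indexed by the $2^4-1=15$ non-empty subsets of that four-element set) correctly identifies the count; the set $\{0,1,2,3,4\}$ printed in the lemma is evidently a typo, since $T^4N_\beta=0$ forces every layer to be at most $3$. One small slip: the LR-tableau $\Gamma_X$ has $\gamma=(2)$, not $(2,2)$ --- with $\beta=(4,2)$ and $\alpha=(3,1)$ one has $|\gamma|=|\beta|-|\alpha|=2$, and all two empty boxes sit in the first column. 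This does not affect the case analysis you sketch; the relevant point is that the row-wise merge-and-sort of tableaux of poles and empty pickets cannot reproduce $\Gamma_X$, which remains true.
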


Recall 
that the LR-tableau of a~direct sum is obtained by merging the rows of the LR-tableaux
of the summands, see Lemma~\ref{lemma-lr-sum}.

\begin {ex}
For $\alpha=(3,1)$, $\beta=(4,3,1)$, $\gamma=(3,1)$, there are two LR-tableaux
of shape $(\alpha,\beta,\gamma)$:
\setlength\unitlength{1mm}
$$
\begin{picture}(42,15)(0,0)
  \put(0,10){
    \begin{picture}(18,12)(0,6)
      \put(-10, 5){$\Gamma_{1}:$}
      \multiput(0,9)(3,0)2{\smbox}
      \put(6,9){\numbox{1}}
      \multiput(0,6)(3,0)1{\smbox}
      \put(3,6){\numbox{1}}
      \put(0,3){\smbox}
      \put(3,3){\numbox{2}}
      \put(0,0){\numbox{3}}
    \end{picture}
  }
  \put(30,10){
    \begin{picture}(18,12)(0,6)
      \put(-10,5){$\Gamma_{2}:$}
      \multiput(0,9)(3,0)2{\smbox}
      \put(6,9){\numbox{1}}
      \multiput(0,6)(3,0)1{\smbox}
      \put(3,6){\numbox{2}}
      \put(0,3){\smbox}
      \put(3,3){\numbox{3}}
      \put(0,0){\numbox{1}}
    \end{picture}
  }
\end{picture}
$$

We determine the possible isomorphism types of embeddings which have LR-tableaux
$\Gamma_1$ and $\Gamma_2$, respectively.  For each tableau, there is only one realization,
up to isomorphy.
$$
\qquad\qquad\qquad
\begin{picture}(6,12)(0,0)
      \put(-35, 5){$M_1=P^4_3\oplus P^3_0\oplus P^1_1:$}
      \multiput(0,0)(0,3)4{\smbox}
      \multiput(6,3)(0,3)3{\smbox}
      \multiput(12,9)(0,3)1{\smbox}
      \put(1.5,7.5)\sbullet
      \put(13.5,10.5)\sbullet
    \end{picture}
\qquad\qquad\qquad\qquad\qquad\qquad\qquad
\begin{picture}(6,12)(0,0)
      \put(-35, 5){$M_2=P^4_1\oplus P^3_3\oplus P^1_0:$}
      \multiput(0,0)(0,3)4{\smbox}
      \multiput(6,3)(0,3)3{\smbox}
      \multiput(12,9)(0,3)1{\smbox}
      \put(1.5,1.5)\sbullet
      \put(7.5,10.5)\sbullet
    \end{picture}
$$

There are no other realizations:  
Any such embedding occurs in the category $\mathcal S(4)$, so Lemma~\ref{lemma-s4}
can be used.
Considering the LR-tableau for $X$, this module cannot occur as a~summand
(since, for example, the LR-tableau for $X$ has a $\singllebox1$ in the
third row, but neither $\Gamma_1$ nor $\Gamma_2$ does).
Hence any realization is a~direct sum of poles and empty pickets.  
Note that the pole $P(0,2,3)$ cannot occur in a~decomposition for $\Gamma_1$ since 
this would require that $P^2_1$ is a~summand, which is not possible 
since there is no column of length 2 
in $\Gamma_1$.
Since each pole $P(S)$ is determined by the sequence $S$, up to isomorphy,
and since $S$ determines the entries in the LR-tableau,
there are no other choices.

As a~consequence, the varieties $\mathbb V_{\Gamma_1}$ and $\mathbb V_{\Gamma_2}$
have the same dimension, and each consists of only one orbit.  Hence
$$\mathbb V_{\Gamma_1}\cap\overline{\mathbb V}_{\Gamma_2}=\emptyset=
\mathbb V_{\Gamma_2}\cap\overline{\mathbb V}_{\Gamma_1}.$$
Thus, $\Gamma_1$ and $\Gamma_2$ are not in boundary relation, but clearly
$\Gamma_1>_{\sf dom}\Gamma_2$.

\end {ex}

\subsection{The boundary relation may not be transitive}

In general, the boundary relation  
given by 
$$\mathbb V_{\widetilde{\Gamma}}\cap \overline{\mathbb V}_\Gamma
              \neq\emptyset$$
is not transitive.  In this section, we provide an example.

\begin {ex}
\label{ex-not-transitive}
Let $\alpha=(3,1)$, $\beta=(4,3,2,1)$, $\gamma=(3,2,1)$.
There are three LR-tableaux:

\setlength\unitlength{1mm}
$$
\begin{picture}(72,15)(0,-2)
  \put(0,10){
    \begin{picture}(18,12)(0,6)
      \put(-10, 5){$\Gamma_{1}:$}
      \multiput(0,9)(3,0)3{\smbox}
      \put(9,9){\numbox{1}}
      \multiput(0,6)(3,0)2{\smbox}
      \put(6,6){\numbox{1}}
      \put(0,3){\smbox}
      \put(3,3){\numbox{2}}
      \put(0,0){\numbox{3}}
    \end{picture}
  }
  \put(30,10){
    \begin{picture}(18,12)(0,6)
      \put(-10,5){$\Gamma_{2}:$}
      \multiput(0,9)(3,0)3{\smbox}
      \put(9,9){\numbox{1}}
      \multiput(0,6)(3,0)2{\smbox}
      \put(6,6){\numbox{2}}
      \put(0,3){\smbox}
      \put(3,3){\numbox{1}}
      \put(0,0){\numbox{3}}
    \end{picture}
  }
  \put(60,10){
    \begin{picture}(18,12)(0,6)
      \put(-10,5){$\Gamma_{3}:$}
      \multiput(0,9)(3,0)3{\smbox}
      \put(9,9){\numbox{1}}
      \multiput(0,6)(3,0)2{\smbox}
      \put(6,6){\numbox{2}}
      \put(0,3){\smbox}
      \put(3,3){\numbox{3}}
      \put(0,0){\numbox{1}}
    \end{picture}
  }
\end{picture}
$$

Distributed over those three tableaux are five pairwise nonisomorphic embeddings
which can be determined using Lemma~\ref{lemma-s4}.

\begin{eqnarray*}
  M_{1}&=&P_3^4\oplus P_0^3\oplus P_0^2\oplus P_1^1, \\
  M_{12}&=&P(0,2,3)\oplus P_0^3\oplus P_1^2, \\
  M_2&=&X\oplus P_0^3\oplus P_0^1,\\
  M_{23}&=&P(0,1,3)\oplus P_1^3\oplus P_0^1, \\
  M_3&=&P_1^4\oplus P_3^3\oplus P_0^2\oplus P_0^1
\end{eqnarray*}
The notation is such that 
$M_i$ or $M_{ix}$ has LR-tableau $\Gamma_i$.

We show that the containment relation of orbit closures is as follows.\smallskip

$$
\begin{picture}(40,30)(0,0)
\put(0,10){\makebox(0,0){$M_1$}}
\put(0,30){\makebox(0,0){$M_{12}$}}
\put(20,10){\makebox(0,0){$M_2$}}
\put(20,30){\makebox(0,0){$M_{23}$}}
\put(40,10){\makebox(0,0){$M_3$}}
\put(5,25){\line(1,-1){10}}
\put(25,25){\line(1,-1){10}}
\put(0,15){\line(0,1){10}}
\put(20,15){\line(0,1){10}}
\put(-10,5){$\underbrace{\phantom{mmmmm}}_{\D\Gamma_1}$}
\put(10,5){$\underbrace{\phantom{mmmmm}}_{\D\Gamma_2}$}
\put(30,5){$\underbrace{\phantom{mmmmm}}_{\D\Gamma_3}$}
\end{picture}
$$

The short exact sequence 
$$0\longrightarrow P_1^2 \longrightarrow M_2 \longrightarrow P(0,2,3)\oplus P_0^3
   \longrightarrow 0$$
shows that $\mathcal O(M_{12})\subset \overline{\mathcal O}(M_2)$ 
(since the ext-order implies the degeneration order, see Section~\ref{section-deg}). Hence
$\mathbb V_{\Gamma_1}\cap \overline{\mathbb V}_{\Gamma_2}\neq \emptyset$
and $\Gamma_1>_{\sf boundary}\Gamma_2$.

Similarly, the short exact sequence 
$$0 \longrightarrow P_1^3 \longrightarrow M_3 \longrightarrow P(0,1,3)\oplus P_0^1
    \longrightarrow 0$$
shows that $\mathcal O(M_{23})\subset \overline{\mathcal O}(M_3)$, hence
$\mathbb V_{\Gamma_2}\cap \overline{\mathbb V}_{\Gamma_3}\neq \emptyset$  and
$\Gamma_2>_{\sf boundary}\Gamma_3$. 

However, $\mathbb V_{\Gamma_1}\cap \overline{\mathbb V}_{\Gamma_3}=\emptyset$.
The only possible orbit in the intersection is $\mathcal O(M_{12})$, since there
are only two orbits in $\mathbb V_{\Gamma_1}$, and since 
the other orbit $\mathcal O(M_1)$ has the  same dimension as 
$\mathbb V_{\Gamma_3}=\mathcal O(M_3)$. 

Note that the module $M_{12}=(U\subset V)$ has the property that 
$\dim U\cap T^2V\cap\soc V=1$,
while for the module $M_3$, the corresponding dimension is $2$.
It follows from Lemma~\ref{lemma-invariant-subspace} with $W=T^2V\cap\soc V$ 
that $\mathcal O(M_{12})\not\subseteq\overline{\mathcal O}(M_3)$.

This finishes the example which illustrates that in general, the condition for
LR-tableaux that $\mathbb V_{\widetilde{\Gamma}}\cap\overline{\mathbb V}_\Gamma\neq \emptyset$
may not define a~partial order. 
\qed
\end {ex}

\section{The algebraic orders for LR-tableaux}
\label{section-deg}

For modules of a~fixed dimension over a~finite dimensional algebra
the three partial orders 
$$\leq_{\sf ext},\quad \leq_{\sf deg},\quad \leq_{\sf hom}$$
have been studied extensively, see for example \cite{bongartz,bongartz1,riedtmann,kos03,zwara}.
In particular, the partial orders are available for invariant subspaces 
in $\mathbb V_{\alpha,\gamma}^\beta$, see \cite[Section~3.2]{ks-hall}.
For the convenience of the reader we recall these definitions. Let $f,g\in\mathbb{V}_{\alpha,\gamma}^\beta$.
 \begin{itemize}
 \item The relation $f \leq_{\sf ext} g$  holds if there exist 
embeddings  $h_i$, $u_i$, $v_i$ of linear operators and short exact
sequences $0\to u_i\to h_i\to v_i\to 0$ of embeddings  
such that $f\cong h_1$, $u_i\oplus v_i\cong h_{i+1}$ for $1\leq i\leq s$, 
and $g\cong h_{s+1}$, for some natural
number $s$.

\item The relation $f \leq_{\sf deg} g$ 
holds if $\mathcal{O}_g \subseteq \overline{\mathcal{O}_f}$ in 
$\mathbb V_{\alpha,\gamma}^\beta(k)$.

\item The relation $f\leq_{\sf hom} g$  holds
if $$[f,h]\leq [g,h]$$ 
for any embedding $h$,
where $[f,h]$ denotes the dimension of the linear space $\Hom(f,h)$
of all homomorphisms of embeddings. 
\end{itemize}

\smallskip
They induce three reflexive and anti-symmetric relations on the set $\mathcal T_{\alpha,\gamma}^\beta$.

\begin {defin}
  Suppose $\Gamma$, $\widetilde{\Gamma}$ are two LR-tableaux of shape $(\alpha,\beta,\gamma)$.
      We write $\Gamma \leq_{\sf ext}\widetilde{\Gamma}$
      ($\Gamma \leq_{\sf deg}\widetilde{\Gamma}$; $\Gamma \leq_{\sf hom}\widetilde{\Gamma}$)
      if there is a~sequence 
      $$\Gamma=\Gamma^{(0)}, \Gamma^{(1)},\ldots,\Gamma^{(s)}=\widetilde{\Gamma}$$
      such that for each $1\leq i\leq s$ there are $f\in \mathbb V_{\Gamma^{(i-1)}}$,
      $g\in \mathbb V_{\Gamma^{(i)}}$ with $f\leq_{\sf ext} g$
      ($f\leq_{\sf deg} g$; $f\leq_{\sf hom} g$).
\end {defin}

It follows from the corresponding properties for modules that:
\begin{itemize}
\item $\Gamma\leq_{\sf ext}\widetilde{\Gamma}$ implies $\Gamma\leq_{\sf deg}\widetilde{\Gamma}$ and 
\item $\Gamma\leq_{\sf deg}\widetilde{\Gamma}$ implies $\Gamma\leq_{\sf hom}\widetilde{\Gamma}$.
\end{itemize}
Also, it is clear from the definitions that
\begin{itemize}
\item  $\Gamma\leq_{\sf deg}\widetilde{\Gamma}$ implies $\Gamma\leq_{\sf boundary}\widetilde{\Gamma}$.
\end{itemize}

We have seen in Section~\ref{section-closure2box} that the boundary relation
implies the dominance order $\leq_{\sf dom}$.  
In the following section we show that also the hom-relation implies the dominance order.
As a consequence, each of the three relations
$\leq_{\sf ext}, \;\leq_{\sf deg}, \;\leq_{\sf hom}$ is anti-symmetric,
hence a partial ordering.

\subsection{The Hom-relation implies the dominance order}

We start with an abstract result.

Denote by $\mathcal N$ the category $\mod k[T]_{(T)}$ of all nilpotent linear operators,
and by $\mathcal S=\mathcal S(k[T]_{(T)})$ the category of all invariant subspaces.
For each $i\in\mathbb N$, there is a~pair of functors
\begin{eqnarray*}
R_i: & \mathcal S\to \mathcal N, & (A\subset B)\mapsto \frac B{T^iA}\\
L_i: & \mathcal N\to \mathcal S, & X \mapsto (\soc^iX\subset X).
\end{eqnarray*}

\begin {lem}
\label{lemma-adjoint}
For each $i\in\mathbb N$, the functors $R_i$, $L_i$ form an adjoint pair.
\end {lem}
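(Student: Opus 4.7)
The plan is to exhibit a natural bijection
$$\Hom_{\mathcal S}\bigl((A\subset B),\,L_i(X)\bigr)\;\cong\;\Hom_{\mathcal N}\bigl(R_i(A\subset B),\,X\bigr),$$
which identifies $R_i$ as the left adjoint of $L_i$. Given the shape of the two functors, this is the only candidate: a morphism in $\mathcal S$ of the form $(A\subset B)\to(\soc^i X\subset X)$ is a $k[T]$-linear map $f:B\to X$ such that $f(A)\subseteq\soc^iX$, and since $\soc^iX=\ker(T^i\cdot{-})$, this is equivalent to the condition $f(T^iA)=0$, which in turn is exactly the universal property of the quotient $R_i(A\subset B)=B/T^iA$.

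First I would define the two directions of the bijection. In one direction, given $f:B\to X$ with $f(A)\subseteq\soc^iX$, the identity $T^if(A)=f(T^iA)=0$ shows that $f$ vanishes on $T^iA$ and therefore factors uniquely as $\bar f:B/T^iA\to X$. In the other direction, given a $k[T]$-linear map $g:B/T^iA\to X$, precompose with the canonical projection $\pi:B\twoheadrightarrow B/T^iA$ to obtain $g\pi:B\to X$; since $g\pi(T^iA)=0$, the image $g\pi(A)$ lies in $\ker(T^i)=\soc^iX$, so $g\pi$ is a morphism in $\mathcal S$. A one-line check shows the two assignments are mutually inverse.

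Next I would verify naturality in both variables. Naturality in $X$ is immediate: for any $\varphi:X\to X'$ in $\mathcal N$ one has $\varphi\circ\soc^i\subseteq\soc^i$, so $L_i$ is functorial, and the bijection commutes with post-composition by $\varphi$. Naturality in $(A\subset B)$ is the corresponding statement for $R_i$: a morphism $(A\subset B)\to(A'\subset B')$ in $\mathcal S$ sends $T^iA$ into $T^iA'$ and therefore induces a well-defined map $B/T^iA\to B'/T^iA'$, and precomposition with this induced map corresponds, under the bijection, to precomposition with the original $\mathcal S$-morphism.

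There is no real obstacle here: the content of the lemma is the universal property of the quotient $B/T^iA$ packaged together with the observation that $\soc^iX$ is precisely the largest submodule on which $T^i$ acts as zero. The only point to keep in mind is the direction of the adjunction, namely that $R_i$ is left adjoint to $L_i$ and not the other way round, which one sees from the fact that the condition $f(T^iA)=0$ naturally restricts maps out of $B$ (and hence out of the quotient) rather than maps into $B$.
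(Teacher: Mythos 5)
Your proof is correct and follows essentially the same route as the paper's: both identify $\Hom_{\mathcal S}((A\subset B),L_i(X))$ with $\Hom_{\mathcal N}(B/T^iA,X)$ by observing that the condition $f(A)\subseteq\soc^iX$ is equivalent to $f$ vanishing on $T^iA$, and then invoking the universal property of the quotient. Your write-up is somewhat more explicit about naturality, which the paper leaves to the reader, but the core argument is the same.
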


  \noindent {\bf Proof.}
Given an operator $X\in\mathcal N$ and an invariant subspace $(A\subset B)\in\mathcal S$,
we need to show that there is a~natural isomorphism
$$\Hom_{\mathcal S}((A\subset B),L_i(X))\;\cong\;\Hom_{\mathcal N}(R_i(A\subset B),X).$$
A morphism in $\mathcal S$ is given by a~commutative diagram:
$$\begin{CD}
 A @>f|_A>> \soc^iX\\
 @VVV @VVV \\
 B @>>f> X
\end{CD}$$
It gives rise to the commutative diagram:
$$\begin{CD}
 \rad^iA @>>> 0\\
@VVV @VVV \\
 B @>>f> X
\end{CD}$$
Hence we obtain a~morphism in $\mathcal N$:
$$\bar f:\frac B{\rad^iA}\; \longarr{}{40}\; X.$$
Conversely, the morphism in $\mathcal N$ gives rise to a~commutative diagram
and hence to a~morphism in $\mathcal S$. 
Clearly, the two constructions are inverse to each other.
  \epv

We recognize that the objects of the form  $P_i^\ell=L_i(N_{(\ell)})$ are pickets.

\begin {prop}\label{prop-hom2dom}
Suppose the objects $(A\subset B)$ and $(\widetilde{A}\subset \widetilde{B})$ have LR-tableaux
$\Gamma$ and $\widetilde{\Gamma}$, respectively.  The following assertions are equivalent:
\begin{enumerate}
\item $\Gamma\leq_{\sf dom}\widetilde{\Gamma}$
\item For each picket $P_i^\ell$ the inequality holds:
$$\dim\Hom_{\mathcal S}((A\subset B),P_i^\ell)\leq\dim\Hom_{\mathcal S}((\widetilde{A}\subset\widetilde{B}),P_i^\ell)$$
\end{enumerate}
\end {prop}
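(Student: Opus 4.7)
The plan is to translate both conditions into numerical inequalities between the partial column-sums of the partitions $\gamma^{(i)}$ and $\widetilde\gamma^{(i)}$, where they will coincide.

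First I would use the adjunction of Lemma~\ref{lemma-adjoint} applied to $X=P^\ell$. Since $L_i(P^\ell)=P_i^\ell$, the adjunction gives a natural isomorphism
\[
\Hom_{\mathcal S}\big((A\subset B),P_i^\ell\big)\;\cong\;\Hom_{\mathcal N}\big(B/T^iA,\,P^\ell\big),
\]
and $B/T^iA\cong N_{\gamma^{(i)}}$ by the definition of the LR-tableau (see Equation~(\ref{equation-defining-type}) in the case $r$ large, or the defining formula $N_{\gamma^{(i)}}=B/T^iA$). So condition (2) is equivalent to
\[
\dim\Hom_{k[T]}\!\big(N_{\gamma^{(i)}},P^\ell\big)\;\le\;\dim\Hom_{k[T]}\!\big(N_{\widetilde\gamma^{(i)}},P^\ell\big)
\quad\text{for all }i,\ell.
\]

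Next I would invoke the elementary identity (already used in the proof of Proposition~\ref{proposition-critereon4closed})
\[
\dim\Hom_{k[T]}(N_\mu,P^\ell)\;=\;\sum_j \min\{\mu_j,\ell\}\;=\;\mu'_1+\cdots+\mu'_\ell,
\]
valid for every partition $\mu$. Applied to $\mu=\gamma^{(i)}$ and $\mu=\widetilde\gamma^{(i)}$, the inequality in (2) becomes
\[
(\gamma^{(i)})'_1+\cdots+(\gamma^{(i)})'_\ell\;\le\;(\widetilde\gamma^{(i)})'_1+\cdots+(\widetilde\gamma^{(i)})'_\ell
\quad\text{for all }i,\ell,
\]
which is precisely the statement that $\gamma^{(i)}\le_{\sf nat}\widetilde\gamma^{(i)}$ for every $i$, i.e.\ $\Gamma\le_{\sf dom}\widetilde\Gamma$. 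This gives both implications simultaneously.

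No real obstacle arises here; the only point requiring a little care is the identification $R_i(A\subset B)=N_{\gamma^{(i)}}$ and the recollection of the hom-formula. Both are already in the paper, so the proof is essentially a two-line calculation once the adjunction is invoked.
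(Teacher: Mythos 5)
Your proof is correct and follows essentially the same route as the paper: apply the adjunction $\Hom_{\mathcal S}((A\subset B),P_i^\ell)\cong\Hom_{\mathcal N}(B/T^iA,P^\ell)$ from Lemma~\ref{lemma-adjoint}, use $B/T^iA\cong N_{\gamma^{(i)}}$, and identify $\dim\Hom_{k[T]}(N_{\gamma^{(i)}},P^\ell)$ with the partial column sum $(\gamma^{(i)})'_1+\cdots+(\gamma^{(i)})'_\ell$ (the equality already established in the proof of Proposition~\ref{proposition-critereon4closed}). Both implications then read off directly from the definition of $\leq_{\sf nat}$, exactly as in the paper.
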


  \noindent {\bf Proof.}
By the definition given in Section~\ref{section-lr-orders},
the condition 
$\Gamma\leq_{\sf dom}\widetilde{\Gamma}$ is equivalent to 
$$(\gamma^{(i)})'_1+\cdots+(\gamma^{(i)})'_\ell\leq(\widetilde{\gamma}^{(i)})'_1+\cdots+(\widetilde{\gamma}^{(i)})'_\ell\qquad
  \text{for each $i$ and $\ell$}.$$
Let $i$ and $\ell$ be natural numbers.  
We obtain from Lemma~\ref{lemma-adjoint} and from the equality in the proof of
Proposition~\ref{proposition-critereon4closed} that
$$(\gamma^{(i)})'_1+\cdots+(\gamma^{(i)})'_\ell=\dim\Hom_{\mathcal N}(B/T^iA,N_{(\ell)})
                                      =\dim\Hom_{\mathcal S}((A\subset B),P_i^\ell)$$
The claim follows from this and from the corresponding equality for $(\widetilde{A}\subset \widetilde{B})$.
  \epv

It follows that the  
restriction $\leq_{\sf hom-picket}$ of the hom order to pickets and the dominance relation
are equivalent.  Hence, the hom-relation implies the dominance order.
Without imposing any conditions on the triple $(\alpha,\beta,\gamma)$, we have established
the following implications:
\vspace{-1cm}


$$
\setlength\unitlength{.6mm}
\begin{picture}(40,80)
\put(20,0){\makebox(0,0){$\leq_{\sf dom}$}}
\put(0,20){\makebox(0,0){$\leq_{\sf hom}$}}
\put(40,20){\makebox(0,0){$\leq_{\sf boundary}$}}
\put(20,40){\makebox(0,0){$\leq_{\sf deg}$}}
\put(20,60){\makebox(0,0){$\leq_{\sf ext}$}}
\put(20,50){\makebox(0,0){$\downarrow$}}
\put(10,30){\makebox(0,0){$\swarrow$}}
\put(30,30){\makebox(0,0){$\searrow$}}
\put(10,10){\makebox(0,0){$\searrow$}}
\put(30,10){\makebox(0,0){$\swarrow$}}
\end{picture}
$$

\subsection{The box-order implies the ext-order (for horizontal strips)}
\label{section-box-ext}

Assume that $\Gamma,\widetilde\Gamma$ are LR-tableaux of the shape $(\alpha,\beta,\gamma)$
such that $\beta\setminus\gamma$ is a~horizontal strip. In \cite{ks-ext} we prove the following. 
Suppose that the Littlewood-Richardson tableau $\widetilde\Gamma$
is obtained from $\Gamma$ by an increasing box move.
By \cite[Proposition 6.1]{ks-ext}, there exist
embeddings $M,\widetilde M$ that realize tableaux $\Gamma,\widetilde \Gamma$,
respectively, and such that $M=R\oplus R'\oplus N$, $\widetilde M=\widetilde R\oplus \widetilde R'\oplus N$
for some embeddings $R,R'\widetilde R,\widetilde R',N$. In \cite[Sections 7,8]{ks-ext}
it is shown that there exists a~short exact sequence
$$
0\longrightarrow \widetilde R\longrightarrow Q \longrightarrow \widetilde R'\longrightarrow 0
$$
for some embedding $Q$ with the same LR-tableau as $R\oplus R'$. Therefore $Q\leq_{\sf ext} \widetilde R\oplus \widetilde R'$
and $\Gamma\leq_{\sf ext}\widetilde \Gamma$.

With the assumption that $\beta\setminus\gamma$ is~a horizontal strip we have the following implications:

$$
\setlength\unitlength{.6mm}
\begin{picture}(40,80)
\put(20,0){\makebox(0,0){$\leq_{\sf dom}$}}
\put(0,20){\makebox(0,0){$\leq_{\sf hom}$}}
\put(40,20){\makebox(0,0){$\leq_{\sf boundary}$}}
\put(20,40){\makebox(0,0){$\leq_{\sf deg}$}}
\put(20,60){\makebox(0,0){$\leq_{\sf ext}$}}
\put(20,80){\makebox(0,0){$\leq_{\sf box}$}}
\put(20,70){\makebox(0,0){$\downarrow$}}
\put(20,50){\makebox(0,0){$\downarrow$}}
\put(10,30){\makebox(0,0){$\swarrow$}}
\put(30,30){\makebox(0,0){$\searrow$}}
\put(10,10){\makebox(0,0){$\searrow$}}
\put(30,10){\makebox(0,0){$\swarrow$}}
\end{picture}
$$

\begin {rem}
 \begin{enumerate}
  \item In \cite{ks-ext} the implication $\leq_{\sf box}\;\Longrightarrow\; \leq_{\sf ext}$ is proved in a~more general case.
  \item The Example 2 in Section \ref{section-lr-orders} shows that these orders are not equivalent in general 
  (even for horizontal strips).
  \item Results of \cite{kst} prove the equivalence of all these orders in the case $\beta\setminus\gamma$
  is a~horizontal and vertical strip (compare Theorem \ref{theorem-horizontal-vertical}).
 \end{enumerate}
\end {rem}

\subsection{The ext- and deg-relations are not equivalent}
\label{section-ext-deg}

It is well-known that for modules, the ext-relation $\leq_{\sf ext}$
implies the deg-relation $\leq_{\sf deg}$. 
In general for modules, the converse is not the case.
Here we give an example for embeddings of linear operators.

\begin {ex}
For $\alpha=(4,2)$, $\beta=(6,4,2)$, $\gamma=(4,2)$, there are three
LR-tableaux:
$$
\begin{picture}(79,18)(0,0)
  \put(10,0){
    \begin{picture}(9,18)(0,0)
      \put(-10, 9){$\Gamma_{1}:$}
      \multiput(0,15)(0,-3)4{\smbox}
      \put(0,3){\numbox{3}}
      \put(0,0){\numbox{4}}
      \multiput(3,15)(0,-3)2{\smbox}
      \put(3,9){\numbox{1}}
      \put(3,6){\numbox{2}}
      \put(6,15){\numbox{1}}
      \put(6,12){\numbox{2}}
    \end{picture}
  }
  \put(40,0){
    \begin{picture}(9,18)(0,0)
      \put(-10, 9){$\Gamma_{2}:$}
      \multiput(0,15)(0,-3)4{\smbox}
      \put(0,3){\numbox{2}}
      \put(0,0){\numbox{4}}
      \multiput(3,15)(0,-3)2{\smbox}
      \put(3,9){\numbox{1}}
      \put(3,6){\numbox{3}}
      \put(6,15){\numbox{1}}
      \put(6,12){\numbox{2}}
    \end{picture}
  }
  \put(70,0){
    \begin{picture}(9,18)(0,0)
      \put(-10, 9){$\Gamma_{3}:$}
      \multiput(0,15)(0,-3)4{\smbox}
      \put(0,3){\numbox{1}}
      \put(0,0){\numbox{2}}
      \multiput(3,15)(0,-3)2{\smbox}
      \put(3,9){\numbox{3}}
      \put(3,6){\numbox{4}}
      \put(6,15){\numbox{1}}
      \put(6,12){\numbox{2}}
    \end{picture}
  }
\end{picture}
$$
We show that the partial orders given by $\leq_{\sf ext}$ and $\leq_{\sf deg}$
are as follows:

$$
\raisebox{8mm}{\rm ext:}\qquad
\begin{picture}(20,20)
\put(10,0){\makebox(0,0){$\Gamma_1$}}
\put(0,20){\makebox(0,0){$\Gamma_2$}}
\put(20,20){\makebox(0,0){$\Gamma_3$}}
\put(8,4){\line(-1,2)6}
\put(12,4){\line(1,2)6}
\end{picture}
\qquad\qquad
\raisebox{8mm}{\rm deg:}\qquad
\begin{picture}(10,20)
\put(5,0){\makebox(0,0){$\Gamma_1$}}
\put(5,10){\makebox(0,0){$\Gamma_2$}}
\put(5,20){\makebox(0,0){$\Gamma_3$}}
\put(5,3){\line(0,1)4}
\put(5,13){\line(0,1)4}
\end{picture}
$$
(In each case, $\Gamma_1$ is the largest element in the poset.)

First we describe the embeddings which realize the tableaux.
From \cite{rs-inv} we know that there is a~one-parameter family
of indecomposable embeddings $M_2(\lambda)$ occurring on the mouths of the homogeneous
tubes with tubular index 0; they all have type $\Gamma_2$.
There are two additional indecomposables, they occur in the 
tube of circumference 2 at index 0; the modules are dual to each other
and have type $\Gamma_1$ and $\Gamma_2$, respectively.
We sketch the modules, using the conventions as in \cite{rs-inv}.
$$
\raisebox{8mm}{$M_{12}:$}\qquad
\begin{picture}(9,18)(0,0)
      \multiput(0,0)(0,3)6{\smbox}
      \multiput(3,6)(0,3)2{\smbox}
      \multiput(6,3)(0,3)4{\smbox}
      \put(1.5,10.5)\sbullet
      \put(4.5,10.5)\sbullet
      \put(4.5,7.5)\sbullet
      \put(7.5,7.5)\sbullet
      \put(1.5,10.5){\line(1,0){3}}
      \put(4.5,7.5){\line(1,0){3}}
    \end{picture}
\qquad\qquad
\raisebox{8mm}{$M_{23}:$}\qquad
\begin{picture}(9,18)(0,0)
      \multiput(0,3)(0,3)4{\smbox}
      \multiput(3,0)(0,3)6{\smbox}
      \multiput(6,6)(0,3)2{\smbox}
      \put(1.5,10.5)\sbullet
      \put(4.5,10.5)\sbullet
      \put(7.5,10.5)\sbullet
      \put(1.5,7.5)\sbullet
      \put(1.5,10.5){\line(1,0){6}}
    \end{picture}
$$

In addition, there are three decomposable configurations; note that 
$M_1$ is the dual of $M_3$ while $M_{123}$ is self dual.
$$
\raisebox{8mm}{$M_{1}:$}\qquad
\begin{picture}(15,18)(0,0)
      \multiput(0,0)(0,3)6{\smbox}
      \multiput(6,3)(0,3)4{\smbox}
      \multiput(12,6)(0,3)2{\smbox}
      \put(1.5,10.5)\sbullet
      \put(13.5,10.5)\sbullet
    \end{picture}
\qquad\qquad
\raisebox{8mm}{$M_{123}:$}\qquad
\begin{picture}(12,18)(0,0)
      \multiput(0,0)(0,3)6{\smbox}
      \multiput(3,6)(0,3)2{\smbox}
      \multiput(9,3)(0,3)4{\smbox}
      \put(1.5,10.5)\sbullet
      \put(4.5,10.5)\sbullet
      \put(1.5,10.5){\line(1,0){3}}
      \put(10.5,7.5)\sbullet
    \end{picture}
\qquad\qquad
\raisebox{8mm}{$M_{3}:$}\qquad
\begin{picture}(15,18)(0,0)
      \multiput(0,0)(0,3)6{\smbox}
      \multiput(6,3)(0,3)4{\smbox}
      \multiput(12,6)(0,3)2{\smbox}
      \put(1.5,4.5)\sbullet
      \put(7.5,13.5)\sbullet
    \end{picture}
$$

The modules $M_1=P_4^6\oplus P_0^4\oplus P_2^2$ and $M_{123}=P(0,1,4,5)\oplus P_2^4$
have type $\Gamma_1$, and $M_3=P_2^6\oplus P_4^4\oplus P_0^2$ has type $\Gamma_3$.

We claim that there are no further isomorphism types of objects in 
$\mathbb V_{\alpha,\gamma}^\beta$.

\smallskip
For finite fields, the Hall polynomial $g_{\alpha,\gamma}^\beta$ counts the number
of submodules of $N_\beta$ which are isomorphic to $N_\alpha$ and have factor $N_\gamma$.
For each of the isomorphism types of embeddings (that is, $M_1$, $M_{12}$, $M_{123}$,
$M_2(\lambda)$ ($\lambda\neq 0,1$), $M_{23}$, $M_3$), we can count the corresponding
numbers of submodules of $N_\beta$.  It is straightforward to verify that the sum,
taken over the isomorphism types, is exactly $g_{\alpha,\gamma}^\beta$. 

\smallskip
For algebraically closed fields, the embeddings $M_1$, $M_{123}$, $M_3$ are sums of
exceptional objects in the covering category $\mathcal S(\widetilde 6)$ studied in 
\cite{rs-inv}, the others are indecomposable non-exceptional objects.
The $M_2(\lambda)$ occur in the homogeneous tubes, $M_{12}$ and $M_{23}$ 
in the tube of circumference 2 in the tubular family of index 0; 
the remaining tubes of index 0 are pictured in \cite[(2.3)]{rs-inv}, they contain
no non-exceptional objects in $\mathbb V_{\alpha,\gamma}^\beta$.
All non-exceptional objects in tubes of index different from 0 have higher dimension.
It follows that each remaining object in $\mathbb V_{\alpha,\gamma}^\beta$ is a direct sum
of exceptional modules. Each exceptional object $X$ is determined uniquely by its dimension
vector in $\mathcal S(\widetilde 6)$ and can be realized over any field.  
The dimension of the homomorphism spaces $\Hom(P,X)$ where $P$ is a picket, 
and hence the LR-tableau for $X$ (\cite{s-entries}) do not depend on the base field.
Hence $M_1$, $M_{123}$ and $M_3$ are the only objects in $\mathbb V_{\alpha,\gamma}^\beta$ which
have an exceptional direct summand.

We determine the ext-order and the deg-order on $\mathcal T_{\alpha,\gamma}^\beta$.

Consider the short exact sequences
$$0 \longrightarrow P_2^4\longrightarrow M_{23}\longrightarrow P(0,1,4,5)\longrightarrow 0$$
and
$$0\longrightarrow P_2^4\longrightarrow M_3\longrightarrow P(0,1,4,5)\longrightarrow 0.$$
In each, the sum of the end terms is $M_{123}$.  It follows that 
$\Gamma_1\geq_{\sf ext}\Gamma_2$ and $\Gamma_1\geq_{\sf ext}\Gamma_3$, respectively.
Note that $\Gamma_2{\not>}_{\sf ext}\Gamma_3$ since there is no decomposable module of type $\Gamma_2$.

Since the ext-relation implies the deg-relation, it remains to show that $\Gamma_2\geq_{\sf deg}\Gamma_3$.
As mentioned, the modules $M_1$ and $M_3$ are dual to each other, so their orbits have the same dimension.
As $\mathcal O_{M_3}=\mathbb V_{\Gamma_3}$, and since all varieties given by LR-tableaux are
irreducible of the same
dimension, it follows that $\mathcal O_{M_1}$ is dense in $\mathbb V_{\Gamma_1}$. 
In particular, $\mathcal O_{M_1}$ contains $\mathcal O_{M_{12}}$ in its closure.
Applying duality again, we obtain that $\mathcal O_{M_3}$ contains $\mathcal O_{M_{23}}$ 
in its closure.  Thus, $\mathcal O_{M_{23}}$ is in the closure of $\mathbb V_{\Gamma_3}$.
\qed
\end {ex}

{\bf Acknowledgements}
The authors are indebted to Hugh Thomas for discussions which led
to the proof of Theorem~\ref{theorem-dom-box}, 
and for his contribution of the example
in Section~\ref{section-bound-dom-not-equiv}.

\smallskip
This research project was started when the authors visited the Mathematische 
Forschungsinstitut Oberwolfach in a Research in Pairs project.
They would like to thank the members of the 
Forschungsinstitut for creating an outstanding
environment for their mathematical research.



\end{document}